\theoremstyle{plain}
\newtheorem{teo}{}[section]
\newtheorem{prop}[teo]{Proposition}
\newtheorem{cor}[teo]{Corollary}
\newtheorem{rem}[teo]{Remark}
\newtheorem{lem}[teo]{Lemma}
\newtheorem{thm}[teo]{Theorem}
\newtheorem{df}[teo]{Definition}
\theoremstyle{definition}
\newtheorem{ex}[teo]{Example}
\newcommand\blfootnote[1]{%
  \begingroup
  \renewcommand\thefootnote{}\footnote{#1}%
  \addtocounter{footnote}{-1}%
  \endgroup
}
\title{On some topological realizations of groups and homomorphisms}
\author{Pedro J. Chocano, Manuel A. Mor\'on and Francisco R. Ruiz del Portal}
\date{}
\begin{document}
\maketitle

\begin{abstract}
Let $f:G\rightarrow H$ be a homomorphism of groups. We construct a topological space $X_f$ such that its group of homeomorphisms is isomorphic to $G$, its group of homotopy classes of self-homotopy equivalences is isomorphic to $H$ and the natural map between the group of homeomorphisms of $X_f$ and the group of homotopy classes of self-homotopy equivalences of $X_f$ is $f$. In addition, we consider realization problems involving homology, homotopy groups and groups of automorphisms.
\end{abstract}

\section{Introduction}
\blfootnote{2020  Mathematics  Subject  Classification: 20B25, 06A06, 06A11 05E18, 55P10, 55P99.}
\blfootnote{Keywords: Automorphism group, homotopy equivalence, Alexandroff spaces, posets, homology groups, homotopy groups.}
\blfootnote{This research is partially supported by Grants PGC2018-098321-B-100 and BES-2016-076669 from Ministerio de Ciencia, Innovación y Universidades (Spain).}

Alexandroff spaces are topological spaces with the property that the arbitrary intersection of open sets is open. That sort of topological spaces was first studied by P.S. Alexandroff in \cite{alexandroff1937diskrete}, where it is shown that they can also be seen as partially ordered sets. This viewpoint can be used to express topological notions in combinatorial terms. A particular case of Alexandroff spaces are finite topological spaces. There are two foundational papers on this subject that were published independently in 1966 \cite{mccord1966singular,stong1966finite}. In \cite{stong1966finite}, R.E. Stong made an analysis of the homeomorphism classification of finite topological spaces using matrices and also introduced combinatorial techniques to study their homotopy type. In \cite{mccord1966singular}, M.C. McCord studied the singular homology groups and homotopy groups of Alexandroff spaces proving that for every Alexandroff space $X$ there exists a simplicial complex $\mathcal{K}(X)$ sharing the same homotopy groups and singular homology groups. In fact, it is shown that there is a continuous map $f_X:|\mathcal{K}(X)|\rightarrow X$ inducing isomorphism on all homotopy groups. The converse result is also obtained, that is, given a simplicial complex $L$, there exists an Alexandroff space $\mathcal{X}(L)$ having the same singular homology groups and homotopy groups of $L$. 

Finite topological spaces or Alexandroff spaces are a good tool to solve realization problems: given a category $C$ and a group $G$, is there an object $X$ in $C$ such that the group of automorphisms of $X$ is isomorphic to $G$? In \cite{birkhoff1936order,thornton1972spaces,barmak2009automorphism}, it is proved that for every finite group $G$ there exists a finite topological space $X_G$ such that its group of homeomorphisms is isomorphic to $G$. Recently, in \cite{barmak2020automorphism2}, J.A. Barmak constructed a finite topological space $X_G$ with $4|G|$ points and lower cardinality than the finite topological spaces obtained in \cite{birkhoff1936order,thornton1972spaces,barmak2009automorphism}. On the other hand, L. Babai in \cite{babai1980finite} obtained a finite topological space $X_G$ with $3|G|$ points realizing $G$ as a group of homeomorphisms. This topological space has the disadvantage that it requires to find a ``good'' set of generators of $G$ satisfying a list of non-trivial properties. A generalization of these results for non-finite groups was made in \cite{chocano2020topological}, where the realization problem for the homotopical category $HTop$ and the pointed homotopical category $HTop_*$ was also solved. To be precise, it was proved that for every group $G$ there exists an Alexandroff space $\overline{X}_G^*$ such that its group of automorphisms in the topological category $Top$, $HTop$ and $HTop_*$ is isomorphic to $G$. 

The restriction of $HTop$ to topological spaces with the homotopy type of a CW-complex is denoted by $HPol$. In $HPol$ and the pointed version $HPol_*$, the realizability problem has a long history, see for instance \cite{kahn1976realization}. Recently, C. Costoya and A. Viruel solved the realization problem in $HPol_*$ for finite groups in \cite{costoya2014every}.

We introduce a bit of notation. Let $X$ be a topological space. Let us denote by $Aut(X)$ the group of homeomorphisms of $X$. Let $\mathcal{E}(X)$ denote the group of homotopy classes of self-homotopy equivalences of $X$. Let $\mathcal{E}_*(X)$ denote the group of pointed homotopy classes of pointed self-homotopy equivalences of $X$.

The following result answers a natural question posed by professor Jesús Antonio Álvarez López during a talk in the VIII Encuentro de Jóvenes Topólogos (A Coruña, 2019).

\begin{lem}\label{thm:teoremaPrincipal} Let $G$ and $H$ be two groups. There exists a topological space $X_H^G$ such that $Aut(X_H^G)$ is isomorphic to $G$ and $\mathcal{E}(X_H^G)$ is isomorphic to $H$.
\end{lem}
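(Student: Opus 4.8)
\emph{Proof proposal.} The plan is to realize $X_H^G$ as a disjoint union $A\sqcup B$ of two Alexandroff spaces, splitting the two tasks: $A$ will carry the homeomorphism group $G$ and be homotopically trivial ($\mathcal{E}(A)=1$), while $B$ will carry the self-homotopy-equivalence group $H$ and be topologically rigid ($Aut(B)=1$). We may assume $G\neq 1\neq H$, the degenerate cases being settled by a point or by one of the two pieces alone; note that this construction makes the natural map $Aut(X_H^G)\to\mathcal{E}(X_H^G)$ trivial, which is irrelevant for the statement. For $A$, start from any poset $P$ with $Aut(P)\cong G$ — for instance $P=\overline{X}_G^*$ from \cite{chocano2020topological} — and adjoin a maximum $\hat{1}$, setting $A=P\cup\{\hat{1}\}$. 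Then $A$ is connected, it is contractible because the identity is pointwise below the constant map at $\hat{1}$ (so it is homotopic to it), hence $\mathcal{E}(A)=1$; and every order-automorphism of $A$ fixes the unique maximum $\hat{1}$ and restricts to an order-automorphism of $P$, so $Aut(A)\cong Aut(P)\cong G$.

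For $B$, start from $Y=\overline{X}_H^*$, which by \cite{chocano2020topological} satisfies $Aut(Y)\cong\mathcal{E}(Y)\cong H$ with $H$ acting faithfully by order-automorphisms, and which is not contractible since $H\neq 1$. Choose $N$ larger than the length of any chain in $Y$, fix an injection $p\mapsto n(p)$ from the underlying set of $Y$ into $\{N+1,N+2,\dots\}$, and for each point $p$ of $Y$ glue on a new chain $q_1^{\,p}<q_2^{\,p}<\dots<q_{n(p)}^{\,p}$ with $q_1^{\,p}$ covering $p$, each $q_{i+1}^{\,p}$ covering $q_i^{\,p}$, and no further new relations; call the result $B$. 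Deleting these chains one level at a time, from the top, is a sequence of beat-point removals, each a deformation retraction, so $B$ has the homotopy type of $Y$ and $\mathcal{E}(B)\cong\mathcal{E}(Y)\cong H$. On the other hand, a homeomorphism of $B$ is an order-automorphism of the poset $B$; since the longest chain below $q_{n(p)}^{\,p}$ has length $n(p)>N$, and the $n(p)$ are pairwise distinct and exceed every chain length in $Y$, such an automorphism must send each attached chain to itself, hence fix every $q_i^{\,p}$ and every $p$, hence restrict to an order-automorphism of $Y$ fixing every point; by faithfulness it is the identity. Thus $Aut(B)=1$, and $B$ is connected and not contractible.

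Finally set $X_H^G=A\sqcup B$. Since $A$ is contractible and $B$ is not, the two components are neither homeomorphic nor homotopy equivalent, and both are connected; hence any homeomorphism of $X_H^G$ preserves the components and restricts to homeomorphisms of each, giving $Aut(X_H^G)\cong Aut(A)\times Aut(B)\cong G$, and any self-homotopy equivalence preserves the components up to homotopy and restricts to self-homotopy equivalences of each, giving $\mathcal{E}(X_H^G)\cong\mathcal{E}(A)\times\mathcal{E}(B)\cong H$.

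The main obstacle is the rigidification of $B$: the attached gadgets must be chosen so that no homeomorphism can carry a chain into $Y$ or permute two chains — handled here by taking long chains of pairwise distinct lengths — while at the same time leaving the homotopy type unchanged, which is what the beat-point collapses guarantee; the argument also relies on the faithfulness of the $H$-action on $\overline{X}_H^*$, which is exactly the content of $Aut(\overline{X}_H^*)\cong H$. A secondary, routine point is checking that for a disjoint union of two connected spaces that are neither homeomorphic nor homotopy equivalent, both $Aut$ and $\mathcal{E}$ split as the corresponding direct products.
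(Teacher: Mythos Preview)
Your overall strategy matches the paper's: build two pieces, one carrying $G$ as its homeomorphism group while being contractible, the other carrying $H$ as its group of self-equivalences while being topologically rigid, then combine them. The paper glues the two pieces at a single point $*$; you take a disjoint union. Both combinations work for this lemma, though the paper's connected version is what gets refined later into Theorem~\ref{thm:realizacionHomo}, so the gluing buys extensibility that the disjoint union does not.

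There are, however, two gaps in your rigidification of $B$. The more serious one is cardinality: the injection $p\mapsto n(p)$ from the underlying set of $Y$ into $\{N+1,N+2,\dots\}$ exists only when $Y$ is countable, so your construction breaks down for uncountable $H$. The paper avoids this by attaching, instead of finite chains, pairwise non-isomorphic well-ordered sets $W_h$ (one for each $h\in H$, and only at the distinguished points $A_{(h,0)}$); distinct ordinals supply such a family in any cardinality, and the beat-point retraction argument goes through unchanged.

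The second gap is in the invariant you use to pin down the attached chains: the longest chain below $q_{n(p)}^{\,p}$ in $B$ has length $n(p)+ht_Y(p)$, not $n(p)$, and these numbers can coincide for different $p$. A correct argument uses that $Y=\overline{X}_H^*$ has no beat points: then the $q_i^{\,p}$ are precisely the elements of $B$ covering a unique element, so any automorphism of $B$ permutes the family of attached chains among themselves; since their lengths $n(p)$ are pairwise distinct it must fix each chain setwise, hence each $q_1^{\,p}$, hence each $p$ (the unique element $q_1^{\,p}$ covers).
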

An immediate consequence of Lemma \ref{thm:teoremaPrincipal} is the following: for a general topological space there is no relation between its group of homeomorphisms and its group of homotopy classes of self-homotopy equivalences. Moreover, if $G$ and $H$ are two groups, then we can produce infinitely many non-homeomorphic topological spaces having $G$ as their group of homeomorphisms and having $H$ as their group of homotopy classes of self-homotopy equivalences.

For every topological space $X$ there is a natural homomorphism of groups $\tau: Aut(X)\rightarrow \mathcal{E}(X)$ sending each homeomorphism $f$ to its homotopy class $[f]$. Given two groups $G$ and $H$, we consider the topological space $X_G^H$ obtained in Lemma \ref{thm:teoremaPrincipal}. The kernel of $\tau:Aut(X_G^H)\rightarrow \mathcal{E}(X_G^H)$ corresponds precisely to $Aut(X_H^G)$. The image of $\tau$ is the homotopy class of the identity map. However, modifying the construction of the topological space $X_H^G$, we can obtain a stronger version of Lemma \ref{thm:teoremaPrincipal}.
\begin{thm}\label{thm:realizacionHomo}
Let $f:G\rightarrow H$ be a homomorphism of groups. There exists a topological space $X_f$ such that $Aut(X_f)=G$, $\mathcal{E}(X_f)=H$ and $\tau=f$.
\end{thm}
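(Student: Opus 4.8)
\emph{Set-up of the building blocks.} The plan is to glue together two Alexandroff spaces (finite if $G$ and $H$ are finite): a \emph{rigid contractible} space carrying $G$, and a \emph{minimal} space carrying $H$, along a locus that encodes $f$. For a group $K$, let $C_K$ be a contractible Alexandroff space with $Aut(C_K)\cong K$; one gets it from a minimal space realizing $K$ by adjoining a minimum, so that every homeomorphism fixes that minimum and $\mathcal{E}(C_K)$ is trivial. For a group $L$, let $M_L$ be a \emph{minimal} Alexandroff space with $Aut(M_L)\cong L$, for instance the poset (vertices below edges) of a graph realizing $L$ with no vertex of degree $1$. Being minimal, $M_L$ has no beat points, hence every self-homotopy-equivalence of $M_L$ is a homeomorphism, so $\mathcal{E}(M_L)\cong Aut(M_L)\cong L$ and $\tau_{M_L}=\mathrm{id}$. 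These are essentially the ingredients behind Lemma~\ref{thm:teoremaPrincipal}, where one takes $C_G\sqcup D_H$ with $D_H$ a rigidified copy of $M_H$; there the contractible summand sits apart from the $H$-part and forces $\tau$ to be trivial, which is exactly what must be changed.

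\emph{The gluing.} Fix $f\colon G\to H$. Let $G$ act on $C_G$ tautologically and on $M_H$ through $f$ and $H=Aut(M_H)$; note $\ker f$ acts trivially on $M_H$, so the $\ker f$-fixed locus $C_G^{\ker f}$ is a $G$-invariant subspace of $C_G$ on which $G$ acts through $f(G)$. I would then choose, inside a suitably enlarged $M_H$ (still minimal, with the same automorphism and homotopy data), an $f(G)$-invariant subspace $B$ that is (i) \emph{rigidifying}: the only automorphisms of the enlarged $M_H$ compatible with $B$ are those in $f(G)$, and each is determined by its restriction to $B$; and (ii) homotopically negligible: collapsing $B$ does not change the homotopy type. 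Pick an $f(G)$-equivariant identification of $B$ with a subspace $A\subseteq C_G^{\ker f}$ containing the distinguished fixed point, and let $X_f$ be the pushout (amalgam of posets) of $C_G\hookleftarrow A=B\hookrightarrow M_H$.

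\emph{Verification.} One then checks: (1) each $g\in G$ extends to the homeomorphism of $X_f$ equal to $g$ on $C_G$ and $f(g)$ on $M_H$ — these agree on $A=B$ since $\ker f$ fixes $A$ pointwise and the identification is $f(G)$-equivariant — giving $G\hookrightarrow Aut(X_f)$, $g\mapsto(g,f(g))$; (2) conversely a homeomorphism of $X_f$ preserves the two pieces (they are distinguished, e.g. by the homotopy type of the relevant open sets, or by an added tag) and restricts to a pair $(g,\beta)\in Aut(C_G)\times Aut(M_H)$ agreeing on $A=B$, so rigidity of $B$ forces $\beta=f(g)$; hence $Aut(X_f)=\{(g,f(g))\}\cong G$; (3) since $C_G$ is contractible and $A$ is homotopically negligible, a collapse exhibits $X_f\simeq M_H$, so $\mathcal{E}(X_f)\cong\mathcal{E}(M_H)\cong H$, and under this identification the class of the homeomorphism $(g,f(g))$ is the class of $f(g)\in Aut(M_H)$, i.e. $\tau=f$. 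Degenerate cases are easy: $f$ trivial recovers Lemma~\ref{thm:teoremaPrincipal}, and $f$ injective is simpler since then $C_G^{\ker f}=C_G$.

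\emph{Main obstacle.} The crux is engineering the gluing locus $A\cong B$: it must be $f(G)$-equivariant, rich enough to annihilate every automorphism of $M_H$ outside $f(G)$ and to block spurious homeomorphisms arising on the $C_G$-side or by mixing the two pieces, yet thin and contractible enough to be invisible to homotopy; and it must lie inside the $\ker f$-fixed locus of $C_G$, which is precisely what pins the resulting extension $1\to\ker f\to Aut(X_f)\to f(G)\to 1$ down to the prescribed group $G$ with projection $f$, rather than to some other extension of $f(G)$ by $\ker f$. Constructing the enlarged minimal model of $M_H$ that admits such a $B$ (so that its automorphism group and homotopy type are unchanged while it acquires the needed $f(G)$-invariant rigidifying subspace) is the technical heart of the argument.
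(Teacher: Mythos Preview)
Your high-level plan matches the paper's: build $X_f$ so that each homeomorphism is a pair $(g,f(g))$ acting on a contractible $G$-piece $C_G$ and a minimal $H$-piece $M_H$, with the contractible piece collapsing to give $\mathcal{E}(X_f)\cong H$ and $\tau=f$. But the specific gluing mechanism you propose has a genuine gap.

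You require $A\subseteq C_G^{\ker f}$ to carry, via the identification $A\cong B$, a faithful $H$-action (otherwise the constraint $g|_A=\beta|_B$ cannot force $\beta=f(g)$ for an arbitrary $\beta\in H$). But in every standard construction of $C_G$ --- including the paper's $X_*^G$, where $G$ acts by $\varphi(s)(g,\alpha)=(sg,\alpha)$, freely away from the minimum --- the $\ker f$-fixed locus is just $\{*\}$ whenever $\ker f$ is nontrivial. A one-point $A$ carries only the trivial action, so gluing along it imposes no relation between $g$ and $\beta$: you recover exactly the Lemma~\ref{thm:teoremaPrincipal} space with $\tau$ trivial, not $\tau=f$. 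You flag the construction of $B\subseteq M_H$ as the technical heart, but it is the construction of a sufficiently large $A\subseteq C_G^{\ker f}$ that is actually missing, and enlarging $C_G$ to create such a subspace while preserving $Aut(C_G)=G$ is neither addressed nor obviously possible without essentially rebuilding everything.

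The paper sidesteps this entirely. Rather than a pushout along a common subspace, it keeps $X_*^G$ and $X_H^*$ joined only at $*$ (after stripping the rigidifying chains $W_h$ from $X_H^*$) and \emph{adds new covering relations} $(g,0)\prec A_{(f(g),0)}$ between the two pieces, one for each $g\in G$ (together with $*\prec A_{(h,0)}$ for $h\notin f(G)$). No $\ker f$-fixed locus is needed: the $G$-action on $X_*^G$ stays free away from $*$, and these added relations are $G$-equivariant precisely because $f$ is a homomorphism, since $(g'g,0)\prec A_{(f(g'g),0)}=A_{(f(g')f(g),0)}$. Conversely, any homeomorphism must preserve this family of relations, which pins it down to some $\varphi(g')$ acting as $g'$ on the $G$-side and $f(g')$ on the $H$-side. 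Think of it as inserting the \emph{graph} of $f$ as order relations, rather than amalgamating over a shared subposet; that is the device your sketch is missing.
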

Theorem \ref{thm:realizacionHomo} clearly generalizes Lemma \ref{thm:teoremaPrincipal}. We prefer to prove Lemma \ref{thm:teoremaPrincipal} first for the sake of exposition. Omitting Lemma \ref{thm:teoremaPrincipal}, the proof of Theorem \ref{thm:realizacionHomo} becomes less intuitive. In addition, subsequent results are obtained using the topological space given in Lemma \ref{thm:teoremaPrincipal}.

%

Given a topological space $X$ and a non-negative integer number $n$, we can consider the $n$-th homology group $H_n(X)$ of $X$ or the $n$-th homotopy group $\pi_n(X)$ of $X$. It is natural to consider more realization problems involving these groups, the group of homeomorphisms and the group of homotopy classes of self-homotopy equivalences.

\begin{thm}\label{thm:homologygroupsAutE} Let $G$ and $H$ be finite groups and let $X$ be a topological space with the homotopy type of a compact CW-complex. There exists an Alexandroff space $\overline{X}_H^G$ such that $Aut(\overline{X}_H^G)$ is isomorphic to $ G$, $\mathcal{E}(\overline{X}_H^G)$ is isomorphic to $H$ and $X$ is weak homotopy equivalent to $\overline{X}_H^G$, which implies that $H_n(\overline{X}_H^G) $ is isomorphic to $  H_n(X)$ and $\pi_n(\overline{X}_H^G)$ is isomorphic to $\pi_n(X)$ for every $n\in \mathbb{N}$.
\end{thm}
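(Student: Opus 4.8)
The plan is to run the construction behind Lemma~\ref{thm:teoremaPrincipal} with a finite model of $X$ inserted as its \emph{core}, exploiting that the pieces one attaches to realize $G$ and $H$ can be taken weakly contractible, so that they govern $Aut$ and $\mathcal{E}$ without disturbing the weak homotopy type.

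First I would reduce to combinatorial data. As $X$ has the homotopy type of a compact CW-complex, it is homotopy equivalent to a finite simplicial complex $K$, and by McCord's theorem the finite $T_0$ Alexandroff space $M:=\mathcal{X}(K)$ is weakly homotopy equivalent to $|K|\simeq X$; we may take $M$ connected (treating components separately, or joining them by edges, which does not change the homotopy type). It then suffices to build an Alexandroff space $\overline{X}_H^G$, weakly homotopy equivalent to $M$, with $Aut(\overline{X}_H^G)\cong G$ and $\mathcal{E}(\overline{X}_H^G)\cong H$: McCord's theorem transports the isomorphisms on every $H_n$ and $\pi_n$ from $M$, hence from $X$, to $\overline{X}_H^G$.

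Next I would revisit the proof of Lemma~\ref{thm:teoremaPrincipal}. There, $X_H^G$ is obtained from a connected skeleton by attaching, at its points, a controlled family of pairwise non-homeomorphic finite blocks that jointly encode $G$ as the group of homeomorphisms and $H$ as the group of homotopy classes of self-homotopy equivalences. Two features make that argument portable. (i) The construction is insensitive to which connected skeleton it is run over, so I would run it over $M$, having first made $M$ itself homeomorphism-rigid by attaching a distinct finite marker at each of its points (each marker adding only beat points, so this is a homotopy equivalence of Alexandroff spaces). (ii) With $H$ finite, every block involved --- the markers, the rigidifying blocks, and the blocks responsible for $H$ --- can be chosen weakly contractible as a finite space, the $H$-blocks being weakly contractible but not contractible, which is exactly what enables them to contribute to $\mathcal{E}$; moreover each is glued to $M$ along a single point (a beat-point configuration) or along a weakly contractible subspace. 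Consequently the inclusion $M\hookrightarrow\overline{X}_H^G$ is a weak homotopy equivalence --- the (weak) homotopy-type computation carried out for $X_H^G$ in Lemma~\ref{thm:teoremaPrincipal} now leaves $M$ standing instead of a point --- so $\overline{X}_H^G\simeq X$ weakly.

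It remains to identify $Aut$ and $\mathcal{E}$. That $Aut(\overline{X}_H^G)\cong G$ goes as in Lemma~\ref{thm:teoremaPrincipal}: the pairwise non-homeomorphic markers force every homeomorphism to fix $M$ pointwise, and the remaining blocks then pin the homeomorphism group to exactly $G$. The main obstacle I expect is $\mathcal{E}(\overline{X}_H^G)\cong H$: a self-homotopy equivalence need not be a homeomorphism, so it need not fix $M$, and one cannot reduce to the homeomorphism argument. Instead one must re-prove, now with the skeleton $M$ present, the homotopy-invariance statements of Lemma~\ref{thm:teoremaPrincipal} --- that the homotopy types of the non-contractible $H$-blocks and the combinatorial pattern in which they are attached are invariants of the homotopy type of $\overline{X}_H^G$ --- so that every self-homotopy equivalence induces, up to homotopy, a permutation of that data belonging to $H$, and conversely each element of $H$ is realized. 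The point that makes these invariance statements survive the replacement of the trivial skeleton by $M$ is precisely that every added block is weakly contractible and is attached along a collapsible subspace; finiteness of $G$ and $H$, together with compactness of the CW-complex (which supplies the finite model $M$), keeps the entire construction within finite, hence Alexandroff, spaces.
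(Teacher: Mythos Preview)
Your reduction to a finite model $M=\mathcal{X}(K)$ and the idea of rigidifying $M$ by attaching pairwise non-homeomorphic markers at its points are both correct and match the paper exactly (the paper uses the spaces $W_i$ of Example~\ref{ex:ejemploTrivial} as markers). However, two genuine gaps remain.

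First, your claim that the $H$-blocks from Lemma~\ref{thm:teoremaPrincipal} ``can be chosen weakly contractible'' is false for the construction in that lemma: by Proposition~\ref{prop:WeakHomotopyTypeArbitraryGroups}, $X_H^G$ has the weak homotopy type of a wedge of $3|H||S_H|$ circles, so gluing it (or its pieces) to $M$ would destroy the weak homotopy type. The paper's fix is not to modify those blocks but to take the non-Hausdorff join $X_H^G\circledast W_2$; since $W_2$ is collapsible, Remark~\ref{rem:nonhausdorffsuspensioncontractible} makes the join weakly contractible, while Proposition~\ref{non-hausdorffsuspensionAut} and $Aut(W_2)=1$ leave the automorphism group unchanged. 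This joined piece is then attached to the rigidified $M$ at a single new point $t$, so the McCord complex is a wedge with one contractible summand and one summand equivalent to $|K|$.

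Second, and more seriously, you correctly flag $\mathcal{E}(\overline{X}_H^G)\cong H$ as the hard step but do not prove it; the invariance-of-block-pattern argument you sketch is exactly what one wants to avoid. The paper bypasses it entirely via Stong's theorem (Theorem~\ref{thm:stonghomeoeshomoeq}/Corollary~\ref{cor:autesigualaE}): one removes the beat points of $\overline{X}_H^G$ to obtain its core $Z\cup\{t\}\cup X_H^*\circledast W_2$, checks that this core has no beat points, and concludes $\mathcal{E}(\overline{X}_H^G)\cong \mathcal{E}(\text{core})=Aut(\text{core})$. The computation of $\mathcal{E}$ thus reduces to a homeomorphism computation on the core, where the marker argument applies directly and yields $H$. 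Without this reduction your proposal has no mechanism to control self-homotopy-equivalences that are not homeomorphisms.
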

As an immediate consequence of Theorem \ref{thm:homologygroupsAutE}, we can deduce the following corollaries.
\begin{cor}\label{cor:sequenceOfGroups} Let $H$ and $G$ be finite groups and let $\{F_{i} \}_{i\in I}$ be a set of finitely generated Abelian groups, where $I\subset \mathbb{N}$ is a finite set. There exists a topological space $X$ such that $Aut(X)$ is isomorphic to $G$, $\mathcal{E}(X)$ is isomorphic to $ H$ and $H_{i}(X)$ is isomorphic to $F_{i}$ for every  $i\in I$.
\end{cor}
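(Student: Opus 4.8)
The plan is to derive this purely formally from Theorem \ref{thm:homologygroupsAutE} by first building a CW-complex (in fact a finite-dimensional one) whose homology groups in degrees $i \in I$ are the prescribed finitely generated abelian groups $F_i$, and then feeding that complex into Theorem \ref{thm:homologygroupsAutE}.

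First I would construct, for each $i \in I$, a Moore space $M(F_i, i)$: since $F_i$ is a finitely generated abelian group it splits as $\mathbb{Z}^{r_i} \oplus \mathbb{Z}/q_{i,1} \oplus \cdots \oplus \mathbb{Z}/q_{i,s_i}$, and a Moore space of type $(F_i,i)$ can be built explicitly as a finite CW-complex: wedge $r_i$ copies of $S^i$, and for each torsion summand $\mathbb{Z}/q$ attach an $(i+1)$-cell to $S^i$ by a degree-$q$ map. (For $i=0$ one takes a disjoint union of points or handles the $i=0,1$ cases directly; since $I \subset \mathbb{N}$ I will assume the indices are positive, or note the trivial modifications otherwise.) Then set $X = \bigvee_{i \in I} M(F_i,i)$, a finite — hence compact — CW-complex. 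Since $I$ is finite this is a finite wedge, and by the reduced homology of a wedge together with the computation $\widetilde H_n(M(F_i,i)) \cong F_i$ for $n=i$ and $0$ otherwise, we get $H_i(X) \cong F_i$ for every $i \in I$ (and $H_0(X) \cong \mathbb{Z}$, which does not interfere with the claim since $0$ may or may not lie in $I$; if $0 \in I$ one adjusts the wedge summand in degree $0$ accordingly, or simply notes the statement only constrains the listed indices).

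Now apply Theorem \ref{thm:homologygroupsAutE} to the finite groups $G$, $H$ and this compact CW-complex $X$: it yields an Alexandroff space $\overline{X}_H^G$ with $Aut(\overline{X}_H^G) \cong G$, $\mathcal{E}(\overline{X}_H^G) \cong H$, and $H_n(\overline{X}_H^G) \cong H_n(X)$ for all $n$. In particular $H_i(\overline{X}_H^G) \cong F_i$ for every $i \in I$, so $\overline{X}_H^G$ is the required space.

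There is essentially no obstacle here: the only point needing care is the bookkeeping around degree $0$ (and, if one is pedantic, degree $1$, where one needs the Moore space to be simply connected in higher degrees — a standard fact, as the described $M(F_i,i)$ with $i \geq 2$ is simply connected and for $i=1$ one can still realize any finitely generated abelian $F_1$ as $H_1$ of a finite $2$-complex). Everything substantive is already packaged inside Theorem \ref{thm:homologygroupsAutE}; the corollary is just the observation that prescribed finitely generated abelian homology in finitely many degrees is realized by a compact CW-complex.
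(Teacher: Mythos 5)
Your proposal is correct and follows exactly the paper's own argument: form a wedge of Moore spaces $M(F_i,i)$ to get a compact CW-complex with the prescribed homology in the degrees $i\in I$, then apply Theorem \ref{thm:homologygroupsAutE}. The extra bookkeeping you supply about degrees $0$ and $1$ is a reasonable elaboration of the same proof, not a different route.
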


\begin{cor}\label{cor:homotopyGroupsandAuotomorphis} Let $H$ and $G$ be finite groups,  $n\in \mathbb{N}$ and let $T$ be a finitely presented (Abelian) group (if $n>1$). There exists a topological space $X$ such that $Aut(X)$ is isomorphic $G$, $\mathcal{E}(X)$ is isomorphic to $H$ and $\pi_n(X)$ is isomorphic to $T$.  
\end{cor}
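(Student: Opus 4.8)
\section*{Proof proposal for Corollary \ref{cor:homotopyGroupsandAuotomorphis}}

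The plan is to reduce the statement to Theorem \ref{thm:homologygroupsAutE} by producing, for each admissible pair $(n,T)$, a \emph{compact} CW-complex $Y$ with $\pi_n(Y)\cong T$; feeding $Y$ together with the finite groups $G$ and $H$ into Theorem \ref{thm:homologygroupsAutE} then yields an Alexandroff space $X$ with $Aut(X)\cong G$ and $\mathcal{E}(X)\cong H$ which is moreover weak homotopy equivalent to $Y$, so that $\pi_n(X)\cong\pi_n(Y)\cong T$.

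First I would dispose of the case $n=1$. Here $T$ is an arbitrary finitely presented group, say $T=\langle x_1,\dots,x_r\mid w_1,\dots,w_s\rangle$. The standard presentation complex $Y$ — a single $0$-cell, one $1$-cell for each generator $x_i$, and one $2$-cell attached along the word $w_j$ for each relator — is a finite (hence compact) CW-complex, and $\pi_1(Y)\cong T$ by van Kampen's theorem.

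Next I would handle $n>1$, where $T$ is finitely presented and Abelian, i.e. finitely generated Abelian. Writing $T\cong\mathbb{Z}^k\oplus\bigoplus_{i=1}^{m}\mathbb{Z}/d_i\mathbb{Z}$, I take $Y$ to be the wedge of $k$ copies of $S^n$ with the elementary Moore spaces $M(\mathbb{Z}/d_i\mathbb{Z},n)=S^n\cup_{d_i}e^{n+1}$. This is a finite CW-complex whose cells lie only in dimensions $0$, $n$ and $n+1$, so $Y$ is $(n-1)$-connected, while a cellular chain computation gives $H_n(Y)\cong T$ and $\widetilde{H}_j(Y)=0$ for $j\neq n$. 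The Hurewicz theorem then yields $\pi_n(Y)\cong H_n(Y)\cong T$.

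In both cases $Y$ is a compact CW-complex, so Theorem \ref{thm:homologygroupsAutE} applied to $Y$, $G$ and $H$ provides an Alexandroff space $X:=\overline{Y}_H^G$ with $Aut(X)\cong G$, $\mathcal{E}(X)\cong H$, and a weak homotopy equivalence between $X$ and $Y$; in particular $\pi_n(X)\cong\pi_n(Y)\cong T$, which is what we want. I do not expect a genuine obstacle here: the argument is an assembly of classical realizations of homotopy groups by finite complexes. The only points demanding a little care are the restriction to Abelian $T$ when $n>1$ (forced, since $\pi_n$ is Abelian for $n\geq2$) and the remark that "finitely presented" is exactly the hypothesis that keeps the relevant CW-complex finite, so that Theorem \ref{thm:homologygroupsAutE} applies.
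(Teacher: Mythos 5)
Your proposal is correct and follows essentially the same route as the paper: the paper invokes the first stage of the Eilenberg--MacLane construction (which for $n=1$ is the presentation complex and for $n>1$ amounts to the wedge of spheres with $(n+1)$-cells attached, i.e.\ your Moore-space model) to produce a compact CW-complex with $\pi_n\cong T$, and then applies Theorem \ref{thm:homologygroupsAutE} exactly as you do. Your write-up merely makes the $n=1$ (van Kampen) and $n>1$ (Hurewicz) verifications explicit.
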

%

Roughly speaking, these corollaries say that for a general topological space $X$ its group of automorphisms in $Top$ or $HTop$ does not have any relation to its $n$-th homology or homotopy group and vice versa. In contrast, for the category $HPol$, the situation is completely different since $\mathcal{E}(X)$ contains normal subgroups that are nilpotent. For instance, given a topological space $X$, we denote by $\mathcal{E}_\#(X)$ $(\mathcal{E}_*(X))$ the set of self-homotopy equivalences that induce the identity map in homotopy (homology). It is trivial to check that $\mathcal{E}_\#(X)$ $(\mathcal{E}_*(X))$ is a normal subgroup of $\mathcal{E}(X)$ and if $X$ is a finite $CW$-complex, then $\mathcal{E}_\#(X)$ $(\mathcal{E}_*(X))$ is a nilpotent group. See \cite{costoya2020primer} for more details. Using the construction obtained in Theorem \ref{thm:homologygroupsAutE}, we can find topological spaces that do not satisfy the previous properties. From this we get that some of the techniques used to study the group of self-homotopy equivalences for $CW$-complexes cannot be adapted in a natural way to general spaces.

The organization of the paper is as follows. In Section \ref{section:preliminaries} we introduce basic concepts and results from the literature. In Section \ref{sec:example} we provide an example of one of the main results in order to motivate the main ideas of the proof of Lemma \ref{thm:teoremaPrincipal}. In Section \ref{sec:proofMainTheorem} we prove Lemma \ref{thm:teoremaPrincipal} and give some remarks.
In Section \ref{section:pruebaSegundoTeorema} we prove Theorem \ref{thm:realizacionHomo}. In Section \ref{section:homologyHomotopy} we define a sequence of topological spaces whose homotopy and homology groups are all trivial, their group of automorphisms in $Top$ and $HTop$ are also trivial, but they are not homeomorphic or homotopy equivalent to a point. Then, we prove Theorem \ref{thm:homologygroupsAutE} as well as Corollary \ref{cor:sequenceOfGroups} and Corollary \ref{cor:homotopyGroupsandAuotomorphis}. Finally, we give examples of topological spaces satisfying that the groups $\mathcal{E}_*(\cdot)$ and $\mathcal{E}_\#(\cdot)$ are not nilpotent in general.


\section{Preliminaries}\label{section:preliminaries}
The following definitions and results can be found with more detail in \cite{alexandroff1937diskrete,barmak2011algebraic,mccord1966singular,stong1966finite,may1966finite} .

\begin{df} Let $X$ and $Y$ be topological spaces. A continuous function $f:X\rightarrow Y$ is said to be a weak homotopy equivalence if it induces isomorphisms on all the homotopy groups.
\end{df}
\begin{df} An Alexandroff space $X$ is a topological space satisfying that the arbitrary intersection of open sets is open. 
\end{df}
If $X$ is an Alexandroff space, then for every $x\in X$ there exists a minimal open neighbourhood $U_x$ given by the intersection of every open set containing $x$. $F_x$ denotes the set given by the intersection of every closed set containing $x$. Trivially, every finite topological space is an Alexandroff space. An Alexandroff space $X$ is locally finite if for every $x\in X$ the set $U_x$ is finite.

Let $(X,\leq)$ be a partially ordered set or poset. If $x,y\in X$, then we write $x\prec y$ ($x\succ y$) if and only if $x<y$ ($x>y$) and there is no $z\in X$ such that $x<z<y$ ($x>z>y$). We will denote by $max(X)$ the maximum of $X$ if it exists. We denote by $P_x=(E_x,S_x)$ the cardinal numbers $E_x=|\{y\in X|y\prec x\}|$ and $S_x=|\{ y\in X|x\prec y\}|$. A set $S\subseteq X$ is called lower (upper) if for every $x\in S$ and $y\leq x$ ($y \geq x$) we have $y\in S$.

It is not difficult to verify the following two properties:
\begin{itemize}
\item For a partially ordered set $(X,\leq)$ the family of lower (upper) sets of $\leq$ is a $T_0$ topology on $X$, that makes $X$ a $T_0$ Alexandroff space.
\item For a $T_0$ Alexandroff space, the relation $x\leq_{\tau} y$ if and only if $U_x\subset U_y$ ($U_y \subset U_x$) is a partial order on $X$. 
\end{itemize}

In addition, for a set $X$, the $T_0$ Alexandroff space topologies on $X$ are in bijective correspondence with the partial orders on $X$.

From now on, every Alexandroff space satisfies the $T_0$ separation axiom. The following results can be found, for instance, in \cite{barmak2011algebraic,may1966finite}.
\begin{prop} If $f:X\rightarrow Y$ is a map between Alexandroff spaces, then $f$ is continuous if and only if $f$ preserves the order.
\end{prop}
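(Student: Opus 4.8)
The plan is to prove both implications directly, using the minimal open sets $U_x$ as the bridge between the topology and the order. Throughout I would fix the convention $x\le y \iff U_x\subseteq U_y$, so that $U_x$ is the smallest open set containing $x$ and, correspondingly, the open sets are precisely the lower sets of $\le$. The whole argument hinges on two elementary facts already available: the minimality of $U_x$ among open sets containing $x$, and the fact that in an Alexandroff space every open set is a union of minimal open sets (equivalently, a lower set).

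For the forward implication, I would assume $f$ is continuous and take $x\le x'$ in $X$; the goal is $f(x)\le f(x')$, i.e.\ $f(x)\in U_{f(x')}$. Since $f$ is continuous, $f^{-1}(U_{f(x')})$ is open and contains $x'$, so by minimality of $U_{x'}$ we obtain $U_{x'}\subseteq f^{-1}(U_{f(x')})$. As $x\le x'$ gives $x\in U_{x'}$, it follows that $f(x)\in U_{f(x')}$, which is exactly $f(x)\le f(x')$. Thus $f$ preserves the order.

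For the converse, I would assume $f$ preserves the order and let $V\subseteq Y$ be an arbitrary open set, hence a lower set. To show $f^{-1}(V)$ is open it suffices to verify that it is a lower set: if $x\in f^{-1}(V)$ and $x'\le x$, then order preservation gives $f(x')\le f(x)\in V$, and since $V$ is a lower set we get $f(x')\in V$, so $x'\in f^{-1}(V)$. Hence $f^{-1}(V)$ is a lower set and therefore open, proving that $f$ is continuous.

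The only genuinely delicate point, and thus the main obstacle, is the bookkeeping of the order convention: one must consistently use that $U_x$ is the minimal open neighbourhood of $x$ and that, under the chosen convention, open sets coincide with lower sets (the excerpt lists both conventions $U_x\subseteq U_y$ and $U_y\subseteq U_x$, so one must commit to one and carry it through both directions). Once this dictionary is fixed, each implication collapses to a single application of either the minimality of $U_x$ or the lower-set characterization of open sets, and no further machinery is required.
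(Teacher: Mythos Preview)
Your proof is correct and is exactly the standard textbook argument. Note that the paper does not actually supply a proof of this proposition; it simply records the statement and refers the reader to \cite{barmak2011algebraic,may1966finite}, so there is no in-paper proof to compare against. What you have written is precisely the argument one finds in those references, and your remark about fixing the order convention (so that open sets are the lower sets and $U_x=\{y:y\le x\}$) is well taken and consistent with the convention the paper adopts.
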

From this and previous properties, the following result can be deduced.
\begin{thm}
The category of $T_0$ Alexandroff spaces is isomorphic to the category of partially ordered sets.
\end{thm}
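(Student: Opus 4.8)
The plan is to exhibit two functors that act as the identity on underlying sets and on underlying functions, and to verify that they are mutually inverse. Fix once and for all the convention that the open sets are the lower sets, so that for a poset $(X,\leq)$ the minimal open neighbourhood of a point $x$ is the down-set $U_x=\{y\in X: y\leq x\}$. Define a functor $F$ from the category of posets to the category of $T_0$ Alexandroff spaces as follows: on an object $(X,\leq)$, let $F(X,\leq)$ be the set $X$ equipped with the lower-set topology, which is a $T_0$ Alexandroff space by the first bullet point of the preliminaries; on a morphism, i.e. an order-preserving map $\varphi\colon(X,\leq)\to(Y,\leq)$, set $F(\varphi)=\varphi$ as a function of underlying sets, which is continuous by the Proposition asserting that a map between Alexandroff spaces is continuous if and only if it preserves the order. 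In the reverse direction, define a functor $G$ from $T_0$ Alexandroff spaces to posets: on an object $X$ put $G(X)=(X,\leq_\tau)$, where $x\leq_\tau y$ if and only if $U_x\subseteq U_y$, which is a partial order by the second bullet point; on a continuous map $g$ set $G(g)=g$, which preserves the order by the same Proposition.

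Next I would check functoriality, which is immediate. Since both $F$ and $G$ act as the identity on underlying functions, they send identity morphisms to identity morphisms and preserve composition automatically, composition in both categories being ordinary composition of set maps. The only genuine content in the assignment on morphisms is that the defining property is respected, and this is exactly the statement of the cited Proposition.

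Finally I would verify that $F$ and $G$ are mutually inverse, which yields an isomorphism of categories rather than a mere equivalence. On objects this is precisely the bijective correspondence between $T_0$ Alexandroff topologies on a set and partial orders on that set asserted in the preliminaries: starting from $(X,\leq)$, passing to the lower-set topology and back recovers $\leq$ because $U_x\subseteq U_y$ if and only if $x\leq y$; and starting from an Alexandroff topology, passing to $\leq_\tau$ and back recovers the topology, since a set is open exactly when it is a union of minimal neighbourhoods $U_x$, i.e. exactly when it is a lower set for $\leq_\tau$. On morphisms there is nothing further to prove: the composites $G\circ F$ and $F\circ G$ fix every underlying function, and by the Proposition the hom-sets $\mathrm{Hom}\bigl((X,\leq),(Y,\leq)\bigr)$ and $\mathrm{Hom}\bigl(F(X),F(Y)\bigr)$ coincide as sets of functions, so both composites are literally the identity functors.

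The argument is essentially bookkeeping resting on the two bullet points and the Proposition already established. I expect the only point demanding care to be the consistent choice of ordering convention, namely lower versus upper sets together with the matching choice of $U_x\subseteq U_y$ versus $U_y\subseteq U_x$, so that the round trip on objects returns the original structure and not its opposite. The other conceptual point worth stating explicitly is that an isomorphism of categories is strictly stronger than an equivalence, and that it holds here only because both functors are the identity on underlying sets and underlying maps.
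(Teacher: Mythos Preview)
Your proposal is correct and is precisely the deduction the paper intends: the paper does not give a separate proof but simply remarks that the theorem ``can be deduced'' from the two bullet points (the bijection between $T_0$ Alexandroff topologies and partial orders on a set) together with the Proposition that continuity equals order-preservation. You have carefully written out that deduction by exhibiting the identity-on-underlying-sets functors $F$ and $G$ and checking they are strict inverses, which is exactly the argument the paper leaves to the reader.
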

Hence, partially ordered sets and $T_0$ Alexandroff spaces can be treated as the same object.
\begin{prop} Let $f,g:X\rightarrow Y$ be continuous maps between Alexandroff spaces. If $f(x)\leq g(x)$ ($f(x)\geq g(x)$) for every $x\in X$, then $f$ and $g$ are homotopic.
\end{prop}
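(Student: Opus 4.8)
The plan is to construct an explicit homotopy. First I would reduce to the case $f(x)\leq g(x)$ for all $x$, since the homotopy relation is symmetric and the statement for $f\geq g$ is obtained by interchanging the roles of $f$ and $g$. I will use throughout the dictionary between $T_0$ Alexandroff spaces and posets recorded above: a continuous map preserves the order, the open sets of $Y$ are exactly the lower sets, and the minimal open neighbourhood of a point $x$ is $U_x=\{y : y\leq x\}$.

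The homotopy I propose is the map $H:X\times[0,1]\to Y$ defined by $H(x,t)=f(x)$ for $0\leq t<1$ and $H(x,1)=g(x)$, where $X\times[0,1]$ carries the product of the Alexandroff topology on $X$ and the Euclidean topology on $[0,1]$. By construction $H(\cdot,0)=f$ and $H(\cdot,1)=g$, so once continuity is established $H$ is the desired homotopy. To check continuity, fix an open set $V\subseteq Y$ and compute
\[
H^{-1}(V)=\bigl(f^{-1}(V)\times[0,1)\bigr)\cup\bigl(g^{-1}(V)\times\{1\}\bigr),
\]
noting that $f^{-1}(V)$ and $g^{-1}(V)$ are open in $X$ because $f$ and $g$ are continuous.

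The only delicate point, and the step I expect to be the main obstacle, is verifying that $H^{-1}(V)$ is open at the seam $t=1$: given $(x,1)$ with $g(x)\in V$, I must produce a basic product neighbourhood $U_x\times(1-\varepsilon,1]$ lying inside $H^{-1}(V)$, which forces both $U_x\subseteq g^{-1}(V)$ and $U_x\subseteq f^{-1}(V)$. Here the hypothesis enters decisively: since $V$ is a lower set and $f(x)\leq g(x)\in V$, we get $f(x)\in V$; and since $f,g$ preserve the order, for every $y\leq x$ we have $f(y)\leq f(x)\in V$ and $g(y)\leq g(x)\in V$, so both inclusions hold because $V$ is lower. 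For points $(x,t)$ with $t<1$ and $f(x)\in V$ the argument is easier: choosing $\delta<1-t$, the neighbourhood $U_x\times(t-\delta,t+\delta)$ stays in $[0,1)$ and maps into $V$ by order-preservation alone. Hence $H^{-1}(V)$ is open, $H$ is continuous, and $f\simeq g$. If instead one adopts the convention in which open sets are upper sets, the same construction works verbatim after replacing lower sets by upper sets and $U_x$ by the corresponding up-set.
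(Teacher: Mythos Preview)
Your proof is correct. Note, however, that the paper does not actually supply its own proof of this proposition: it is stated in the preliminaries as a standard fact drawn from the references (Barmak, May), so there is nothing in the paper to compare against directly. Your argument is precisely the classical one found in those sources---define $H$ to be $f$ on $X\times[0,1)$ and $g$ on $X\times\{1\}$, then verify continuity using that open sets are lower sets and that the hypothesis $f\leq g$ forces $g^{-1}(V)\subseteq f^{-1}(V)$ for every open $V$.

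One cosmetic remark: once you observe $g^{-1}(V)\subseteq f^{-1}(V)$, you can write
\[
H^{-1}(V)=\bigl(f^{-1}(V)\times[0,1)\bigr)\cup\bigl(g^{-1}(V)\times[0,1]\bigr),
\]
which is visibly a union of two open boxes and avoids the pointwise check at the seam. Your pointwise verification is of course equally valid.
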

\begin{rem}\label{rem:minimumiscontractible} If $X$ is a $T_0$ Alexandroff space with a minimum (maximum) $x*$, then $X$ is contractible to $x*$. This follows from the previous proposition and the fact that the constant map $c:X\rightarrow X$ given by $c(x)=x*$ satisfies that $c(x)\leq x$ ($c(x)\geq x$) for every $x\in X$. 
\end{rem}

\begin{df} Given a finite poset $(X,\leq)$, the height $ht(X)$ of $X$ is one less than the maximum number of elements in a chain of $X$. The height of a point $x$ in a locally finite Alexandroff space is given by $ht(U_x)$. For a general Alexandroff space $X$, the height of a point $x\in X$ is defined as $\infty$ if $U_x$ contains a chain without a minimum and $ht(U_x)$ otherwise.
\end{df}
\begin{ex} Let us consider the real numbers with the usual order. For every $x\in \mathbb{R}$ the height of $x$ is $\infty$ because the chain $...<x-n<...<x-2<x-1<x$ does not have a minimum. Moreover, $P_x=(0,0)$ since there is no $y\in \mathbb{R}$ satisfying that $x\prec y$ or $x\succ y$. Let us consider $X=\mathbb{N}\cup \{* \}$, where we consider the partial order defined as follows: $n<*$ for every $n\in \mathbb{N}$. It is clear that $U_*$ is not a finite set but the height of $*$ is $1$. Furthermore, $P_*=(|\mathbb{N}|, 0)$.
\end{ex}

\begin{prop}\label{prop:sucesores} Let $X$ and $Y$ be Alexandroff spaces. If $f:X\rightarrow Y$ is a homeomorphism and $x\prec y$, then $f(x)\prec f(y)$ ($f(x)\succ  f(y)$). Furthermore, for every $x\in X$ the height of $x$ is equal to the height of $f(x)$ and $P_x=P_{f(x)}$.
\end{prop}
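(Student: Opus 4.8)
The plan is to exploit the dictionary between $T_0$ Alexandroff spaces and posets recalled above. Since a map between Alexandroff spaces is continuous exactly when it is order-preserving, a homeomorphism $f:X\to Y$ is precisely an isomorphism of posets: both $f$ and $f^{-1}$ are continuous, so both preserve the order, and hence $x<y$ if and only if $f(x)<f(y)$. I would also record the slightly stronger fact that $f$ carries the minimal open set $U_x$ homeomorphically onto $U_{f(x)}$: indeed $f(U_x)$ is an open set containing $f(x)$, and since $f^{-1}$ is continuous it is contained in every open set containing $f(x)$, so $f(U_x)=U_{f(x)}$.

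With this in hand, the statement about covering relations is essentially immediate. If $x\prec y$ then $f(x)<f(y)$, and any $w\in Y$ with $f(x)<w<f(y)$ would yield $x<f^{-1}(w)<y$ by applying the order-isomorphism $f^{-1}$, contradicting $x\prec y$; hence $f(x)\prec f(y)$. (Under the opposite convention relating the topology to the order the same argument with the inequalities reversed gives $f(x)\succ f(y)$, which accounts for the parenthetical alternative.) The equality $P_x=P_{f(x)}$ follows at once, since $f$ then restricts to a bijection from $\{y\in X: y\prec x\}$ onto $\{z\in Y: z\prec f(x)\}$, with inverse induced by $f^{-1}$, and likewise for the sets of elements covering $x$ and $f(x)$; thus $E_x=E_{f(x)}$ and $S_x=S_{f(x)}$.

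For the height I would use that $f|_{U_x}:U_x\to U_{f(x)}$ is an isomorphism of posets. A subset $C\subseteq U_x$ is a chain if and only if $f(C)$ is, and $C$ has a minimum if and only if $f(C)$ does (the minimum being sent to the minimum); hence $U_x$ contains a chain without a minimum if and only if $U_{f(x)}$ does, so $ht(x)=\infty$ exactly when $ht(f(x))=\infty$. Otherwise $f$ induces a cardinality-preserving bijection between the chains of $U_x$ and those of $U_{f(x)}$, so the maximal number of elements in a chain is the same on both sides and $ht(U_x)=ht(U_{f(x)})$, i.e.\ $ht(x)=ht(f(x))$.

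I do not expect a real obstacle here: everything reduces to the fact that a homeomorphism of Alexandroff spaces is an isomorphism of the associated posets. The only points needing a little care are the identity $f(U_x)=U_{f(x)}$, which genuinely uses continuity of $f^{-1}$, and the bookkeeping with possibly infinite cardinalities in $P_x$ and with the infinite-height case; neither of these is difficult.
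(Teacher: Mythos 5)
Your proof is correct: the paper states Proposition \ref{prop:sucesores} without proof as a standard preliminary fact, and your argument---a homeomorphism of $T_0$ Alexandroff spaces is exactly a poset isomorphism, hence preserves covering relations, the sets counted by $P_x$, and (via $f(U_x)=U_{f(x)}$) chains and heights, including the infinite-height case---is precisely the intended reasoning. The only nitpick is the attribution in your aside: the openness of $f(U_x)$ is what uses continuity of $f^{-1}$, while the inclusion $f(U_x)\subseteq V$ for every open $V\ni f(x)$ uses continuity of $f$; both facts are available, so the conclusion stands.
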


The following results provide a combinatorial way to study the homotopy and weak homotopy type of finite topological spaces.
\begin{df} Let $X$ be an Alexandroff space. A point $x$ in $X$ is a down beat point (resp. up beat point) if $U_x\setminus\{x \}$ has a maximum (resp. $F_x\setminus \{x \}$ has a minimum). A finite $T_0$ topological space $X$ is a minimal finite space if it has no beat points. A core of a finite topological space is a strong deformation retract which is a minimal finite space. 
\end{df}

\begin{prop}[\cite{stong1966finite}]\label{prop:quitarbeatpoints} Let $X$ be an Alexandroff space and let $x\in X$ be a beat point. Then $X\setminus \{ x\}$ is a strong deformation retract of $X$.
\end{prop}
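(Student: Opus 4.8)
The plan is to treat the down beat point case explicitly and to recover the up beat case by the dual argument (reversing the order). So assume $x$ is a down beat point and set $y := \max(U_x \setminus \{x\})$. The defining property of $y$ is that every $z < x$ satisfies $z \le y$; equivalently $U_x = U_y \cup \{x\}$ with $y \prec x$, and in fact $U_x \setminus \{x\} = U_y$. I will use the order $z \le w \iff U_z \subseteq U_w$ and the fact, recorded earlier, that a map between Alexandroff spaces is continuous if and only if it preserves the order.

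First I would introduce the candidate retraction $r \colon X \to X \setminus \{x\}$ defined by $r(z) = z$ for $z \ne x$ and $r(x) = y$, and check that it is order preserving, hence continuous. The only comparabilities needing attention are those involving $x$: if $z < x$ then $z \le y = r(x)$ by maximality of $y$, while if $z > x$ then $z > x > y$, so $r(x) = y < z = r(z)$; together with $r$ being the identity off $x$ this yields $r(a) \le r(b)$ whenever $a \le b$. Writing $i \colon X \setminus \{x\} \hookrightarrow X$ for the inclusion, we have $r \circ i = \mathrm{id}_{X\setminus\{x\}}$, so $r$ is a retraction.

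It then remains to produce a homotopy $i \circ r \simeq \mathrm{id}_X$ that fixes $X \setminus \{x\}$ pointwise. Since $i \circ r$ and $\mathrm{id}_X$ agree off $x$ and satisfy $(i\circ r)(z) \le z$ for all $z$ (the only strict inequality being $y < x$ at $z = x$), the homotopy criterion for comparable maps already gives $i \circ r \simeq \mathrm{id}_X$; the point is to pick a homotopy that does not disturb $X \setminus \{x\}$. I would therefore define $H \colon X \times I \to X$ (with $I=[0,1]$) explicitly by $H(z,t) = z$ for every $z \ne x$ and every $t$, by $H(x,0) = x$, and by $H(x,t) = y$ for $t \in (0,1]$. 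Then $H(\cdot,0) = \mathrm{id}_X$, $H(\cdot,1) = i \circ r$, and $H(w,t) = w$ for all $w \ne x$, exactly as required for a strong deformation retraction.

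The main technical point is the continuity of $H$, which cannot be read off from order preservation because $X \times I$ is not Alexandroff. I would verify it by checking that $H^{-1}(U_p)$ is open for each minimal open set $U_p$, as these form a basis. Unwinding the definition, $H^{-1}(U_p)$ equals $\big(U_p \cap (X\setminus\{x\})\big) \times I$ together with a contribution over $z = x$ governed by whether $x \le p$ and whether $y \le p$. Using $U_x \setminus \{x\} = U_y$ one finds that this extra contribution is either empty, all of $\{x\} \times I$, or $\{x\} \times (0,1]$; in the delicate last case any point $(x,t)$ with $t>0$ has the open neighbourhood $U_x \times J$, with $J$ an interval in $(0,1]$ about $t$, contained in $H^{-1}(U_p)$, because every $q \in U_x \setminus \{x\} = U_y$ satisfies $q \le y \le p$. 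This confirms that $H^{-1}(U_p)$ is open, so $H$ is continuous and $X \setminus \{x\}$ is a strong deformation retract of $X$. The up beat case follows verbatim for the opposite order, taking $y = \min(F_x \setminus \{x\})$, where now $i \circ r \ge \mathrm{id}_X$.
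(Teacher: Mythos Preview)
Your argument is correct. Note, however, that the paper does not supply its own proof of this proposition: it is quoted as a preliminary result from Stong's paper and left unproved. So there is no ``paper's proof'' to compare against. Your approach is the standard one in the literature (cf.\ Stong, Barmak, May): send the beat point to the unique element witnessing it and use that the retraction is comparable to the identity. One small remark: the explicit continuity check of $H$ is not strictly necessary once you observe that the usual homotopy between comparable maps $f\le g$ (factoring through the Sierpi\'nski space) is automatically stationary on the set where $f$ and $g$ agree, which here is exactly $X\setminus\{x\}$; this immediately gives the \emph{strong} deformation retraction without the case analysis on $H^{-1}(U_p)$. Your direct verification is fine too, and the case split you carry out is accurate (in particular the identification $U_x\setminus\{x\}=U_y$ is what makes the ``delicate'' case go through).
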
 
If $X$ is a finite $T_0$ topological space, then $X$ has a core. We only need to remove beat points one by one to obtain a minimal finite space.
\begin{thm}[\cite{stong1966finite}] \label{thm:stonghomeoeshomoeq} If $X$ is a minimal finite space, then $f:X\rightarrow X$ is a homeomorphism if and only if $f$ is a homotopy equivalence.
\end{thm}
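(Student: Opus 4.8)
The plan is to prove the nontrivial implication, since a homeomorphism is automatically a homotopy equivalence. I would reduce the whole statement to the \emph{rigidity lemma}: if $X$ is a minimal finite space and $g\colon X\to X$ is continuous with $g\simeq\mathrm{id}_X$, then $g=\mathrm{id}_X$. Granting this, let $f\colon X\to X$ be a homotopy equivalence and choose a continuous homotopy inverse $h$, so that $hf\simeq\mathrm{id}_X$ and $fh\simeq\mathrm{id}_X$; applying the rigidity lemma to the self-maps $hf$ and $fh$ gives $hf=\mathrm{id}_X=fh$, so $f$ is bijective with continuous inverse $h$, i.e.\ a homeomorphism.

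For the rigidity lemma I would first treat the comparable case. \textbf{Claim:} if $X$ is a minimal finite space and $g\colon X\to X$ satisfies $g(x)\le x$ for every $x$ (the case $g(x)\ge x$ being dual), then $g=\mathrm{id}_X$. Otherwise, by finiteness pick $x$ minimal among the points moved by $g$, so $g(x)<x$. For any $y<x$, minimality of $x$ forces $g(y)=y$, and since $g$ preserves the order (being a continuous map between Alexandroff spaces) we get $y=g(y)\le g(x)$; as moreover $g(x)<x$, the point $g(x)$ is the maximum of $U_x\setminus\{x\}$, so $x$ is a down beat point, contradicting that $X$ has no beat points.

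To upgrade ``comparable to $\mathrm{id}_X$'' to ``homotopic to $\mathrm{id}_X$'', I would use the standard fact that two continuous maps between finite $T_0$ spaces are homotopic if and only if they are joined by a \emph{fence}, i.e.\ a finite chain of continuous maps in which consecutive ones are comparable in the pointwise order. (If one does not wish to cite this, it follows by viewing a homotopy $X\times I\to X$ as a path in the finite $T_0$ space of continuous self-maps of $X$ ordered pointwise, together with the fact that in a finite $T_0$ space the path components coincide with the fence-connected components.) Then, given $g\simeq\mathrm{id}_X$, choose a fence $\mathrm{id}_X=\varphi_0,\varphi_1,\dots,\varphi_n=g$ and argue by induction: if $\varphi_i=\mathrm{id}_X$ then $\varphi_{i+1}$ is comparable to $\mathrm{id}_X$, hence equal to it by the Claim or its dual; therefore $g=\varphi_n=\mathrm{id}_X$, which proves the rigidity lemma and hence the theorem.

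The only step that is not essentially formal is the fence lemma: translating a homotopy $X\times I\to X$, via a Lebesgue-number subdivision of $I$, into an explicit chain of pairwise comparable maps. That bookkeeping is where I expect the care to be needed; the beat-point argument of the Claim and the reduction of the first paragraph are short.
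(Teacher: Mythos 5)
Your argument is correct and is essentially the standard Stong--Barmak proof of this result, which the paper simply cites from \cite{stong1966finite} without reproducing: rigidity of minimal finite spaces (a self-map homotopic, hence fence-connected, to the identity equals the identity, via the beat-point argument for maps comparable with the identity), followed by applying rigidity to $hf$ and $fh$. The only step you leave as a sketch, that homotopic maps between finite $T_0$ spaces are fence-connected, is indeed the standard supporting lemma and your outline of it (paths in the finite mapping space with the pointwise order, or a subdivision of $I$) is the usual way it is proved.
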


\begin{cor}\label{cor:autesigualaE} If $X$ is a minimal finite space, then $Aut(X)$ is isomorphic to $\mathcal{E}(X)$.
\end{cor}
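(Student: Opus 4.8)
The plan is to prove that the natural homomorphism $\tau\colon Aut(X)\to\mathcal E(X)$ sending a homeomorphism to its homotopy class is an isomorphism. That $\tau$ is a well-defined group homomorphism is formal (a composite of homeomorphisms is a homeomorphism, and passing to homotopy classes is functorial). For surjectivity I would invoke Theorem~\ref{thm:stonghomeoeshomoeq} directly: any element of $\mathcal E(X)$ is the class of a self-homotopy equivalence $g\colon X\to X$, and by that theorem such a $g$ is already a homeomorphism, so its class lies in the image of $\tau$. Hence everything reduces to showing that $\tau$ is injective, i.e.\ that two homotopic homeomorphisms of $X$ coincide; equivalently, composing one with the inverse of the other, that \textbf{the only homeomorphism of $X$ homotopic to $\mathrm{id}_X$ is $\mathrm{id}_X$}.

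To establish this I would pass to the poset picture and use the standard fact that, for finite $T_0$-spaces, $f\simeq\mathrm{id}_X$ if and only if $f$ and $\mathrm{id}_X$ can be joined by a finite fence $\mathrm{id}_X=f_0,f_1,\dots,f_n=f$ of continuous self-maps of $X$ in which consecutive maps are comparable in the pointwise order (this holds because the set $X^X$ of continuous self-maps, ordered pointwise, is again a finite $T_0$-space, and homotopic maps lie in the same connected component of it). Travelling along the fence, it suffices to prove the one-step statement: if $g\colon X\to X$ is continuous and $g(x)\le x$ for all $x$ (or $g(x)\ge x$ for all $x$), then $g=\mathrm{id}_X$; for then $f_1=\mathrm{id}_X$, then $f_2=\mathrm{id}_X$, and so on up to $f=\mathrm{id}_X$.

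For the one-step statement, suppose $g(x)\le x$ for all $x$ but $g\ne\mathrm{id}_X$, and choose $x$ minimal in the (finite, nonempty) set $\{y\in X:g(y)<y\}$. Then $g(z)=z$ for every $z<x$, and since $g$ preserves the order, $z=g(z)\le g(x)$; as moreover $g(x)<x$, the point $g(x)$ is the maximum of $U_x\setminus\{x\}$, so $x$ is a down beat point --- impossible, since a minimal finite space has no beat points. The case $g(x)\ge x$ is dual and produces an up beat point. Assembling the three steps gives $Aut(X)\cong\mathcal E(X)$.

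The only step with real content is this last order-theoretic lemma on maps comparable to the identity, and I expect it to be the main obstacle, though it is short once one argues through a minimal ``bad'' point. Everything else --- that $\tau$ is a homomorphism, surjectivity via Stong's theorem, and the reduction of an arbitrary homotopy to a fence --- is routine and classical. The one point requiring a little care is the finiteness of $X$, which is what guarantees both the existence of the fence and of a minimal element in $\{y:g(y)<y\}$, and the fact that throughout one may replace continuity by order-preservation.
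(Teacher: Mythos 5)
Your proposal is correct and follows essentially the paper's route: surjectivity of $\tau$ is exactly Theorem~\ref{thm:stonghomeoeshomoeq}, and your injectivity argument (the fence description of homotopy in $X^X$ plus the minimal-bad-point argument showing that a continuous self-map comparable with the identity of a minimal finite space must equal the identity) is precisely the classical Stong rigidity fact that the paper invokes implicitly, cf.\ Remark~\ref{rem:aut=e}. The only difference is that you spell out that uniqueness-in-the-homotopy-class step in full, which the paper treats as immediate.
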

\begin{rem}\label{rem:aut=e}
The result of Corollary \ref{cor:autesigualaE} can be stated in a stronger way. Let $X$ be a minimal finite space. It is easy to check that  $Aut(X)=\mathcal{E}(X)$. For every $[f]$ in $\mathcal{E}(X)$ there is only one element in the class $[f]$. We can identify every homeomorphism with its homotopy class.
\end{rem}
\begin{rem}\label{rem:generalizarKukiela}
Corollary \ref{cor:autesigualaE} can be generalized to Alexandroff spaces. To do this, consider the notion of being locally a core introduced in \cite{kukiela2010homotopy}. This notion generalizes the notion of minimal finite space, that is, every minimal finite space is locally a core. Let $X$ be locally a core. Then a continuous map $f:X\rightarrow X$ is a homeomorphism if and only if $f$ is a homotopy equivalence. Moreover, we get that $Aut(X)\simeq \mathcal{E}(X)$.
\end{rem}

\begin{df} Let $X$ be a finite $T_0$ topological space. A point $x$ in $X$ is a down weak beat point (resp. up weak beat point) if $U_x\setminus\{x \}$ is contractible (resp. $F_x\setminus \{x \}$ is contractible).
\end{df}
\begin{prop}[\cite{barmak2011algebraic}] Let $X$ be a finite $T_0$ topological space and let $x\in X$ be a weak beat point. Then the inclusion $i:X\setminus \{ x\}\rightarrow X$ is a weak homotopy equivalence.
\end{prop}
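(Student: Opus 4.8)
The plan is to transfer the statement to simplicial complexes via McCord's theorem and then recognize the passage from $X$ to $X\setminus\{x\}$ as the collapse of a contractible subcomplex of the order complex. Assume first that $x$ is a \emph{down} weak beat point, so that $\hat U_x:=U_x\setminus\{x\}$ is contractible; the case of an up weak beat point follows by passing to the opposite order. Write $Y=X\setminus\{x\}$. The McCord maps $|\mathcal K(Y)|\to Y$ and $|\mathcal K(X)|\to X$ are weak homotopy equivalences, and (taking $\mu_Z(\alpha)=\min(\mathrm{supp}\,\alpha)$) they are natural, so together with the inclusions $|\mathcal K(Y)|\hookrightarrow|\mathcal K(X)|$ and $i\colon Y\hookrightarrow X$ they form a commutative square. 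Hence $i$ is a weak homotopy equivalence if and only if $\mathcal K(Y)\hookrightarrow\mathcal K(X)$ is one, and since both sides are polyhedra, by Whitehead's theorem this holds if and only if that inclusion is a homotopy equivalence.

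Next I would describe how $\mathcal K(X)$ sits over $\mathcal K(Y)$. The simplices of $\mathcal K(X)$ not lying in $\mathcal K(Y)$ are exactly the chains of $X$ containing $x$, and together with their faces they form the closed star $\overline{\mathrm{st}}(x)=x*\mathrm{lk}(x)$, so that $\mathcal K(X)=\mathcal K(Y)\cup\overline{\mathrm{st}}(x)$ with $\mathcal K(Y)\cap\overline{\mathrm{st}}(x)=\mathrm{lk}(x)$. Being a cone, $\overline{\mathrm{st}}(x)$ is contractible. The link of a vertex in an order complex splits as a join, $\mathrm{lk}(x)=\mathcal K(\hat U_x)*\mathcal K(\hat F_x)$ with $\hat F_x=F_x\setminus\{x\}$. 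Since $x$ is a down weak beat point, $\hat U_x$ is contractible, hence by McCord's and Whitehead's theorems $|\mathcal K(\hat U_x)|$ is contractible; and a join one of whose factors is contractible is itself contractible, so $\mathrm{lk}(x)$ is contractible.

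Finally I would invoke the standard fact that if $L$ is a contractible subcomplex of a CW complex $K$, then $K\to K/L$ is a homotopy equivalence. Applying this to $\mathrm{lk}(x)\subseteq\mathcal K(Y)$ and to $\overline{\mathrm{st}}(x)\subseteq\mathcal K(X)$ yields homotopy equivalences $\mathcal K(Y)\to\mathcal K(Y)/\mathrm{lk}(x)$ and $\mathcal K(X)\to\mathcal K(X)/\overline{\mathrm{st}}(x)$, while excision gives a canonical homeomorphism $\mathcal K(Y)/\mathrm{lk}(x)\cong\mathcal K(X)/\overline{\mathrm{st}}(x)$ through which the triangle with the inclusion $\mathcal K(Y)\hookrightarrow\mathcal K(X)$ commutes. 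Two-out-of-three then forces that inclusion to be a homotopy equivalence, which by the first paragraph completes the proof.

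I expect the only real bookkeeping to be that last step: checking that the homeomorphism $\mathcal K(Y)/\mathrm{lk}(x)\cong\mathcal K(X)/\overline{\mathrm{st}}(x)$ is compatible with the two collapse maps and with $i$, so that two-out-of-three can be applied. A more finite-space-native alternative, avoiding order complexes, is to verify McCord's open-cover criterion directly for $i\colon Y\to X$ using the basis $\{U_y\}_{y\in X}$: one has $i^{-1}(U_y)=U_y$ when $x\notin U_y$, $i^{-1}(U_x)=\hat U_x$ with both $\hat U_x$ and $U_x$ contractible, and for $y>x$ the point $x$ remains a down weak beat point of the strictly smaller space $U_y$, so an induction on $|X|$ handles $i^{-1}(U_y)=U_y\setminus\{x\}\hookrightarrow U_y$ (the degenerate case in which $X$ has a maximum distinct from $x$ is immediate, since then both $X$ and $Y$ are contractible).
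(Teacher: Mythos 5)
Your argument is correct, and since the paper only cites \cite{barmak2011algebraic} for this proposition, the relevant comparison is with the standard proof there, which yours essentially reproduces: write $\mathcal{K}(X)$ as $\mathcal{K}(X\setminus\{x\})$ together with the closed star of $x$, glued along the link $\mathcal{K}(\hat U_x)\ast\mathcal{K}(\hat F_x)$, note this link is contractible because one join factor is, and transfer back to the finite spaces via the naturality of the McCord maps. Your closing sketch via McCord's basis-like open cover criterion and induction on $|X|$ is a nice finite-space-native variant, but the main line is the same as the cited proof.
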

\begin{df}[\cite{barmak2011algebraic}] Let $X$ be a finite $T_0$ topological space and let $Y\subset X$. It is said that $X$ collapses to $Y$ by an elementary collapse if $Y$ is obtained from $X$ by removing a weak beat point. Given two finite $T_0$ topological spaces $X$ and $Y$, $X$ collapses to $Y$ if there is a sequence $X=X_1,X_2,...,X_n=Y$ of finite $T_0$ topological spaces such that for each $1\leq i<n$, $X_i$ collapse to $X_{i+1}$ by an elementary collapse. 
\end{df}

\begin{lem}\label{lem:beatPoints} Let $X$ and $Y$ be Alexandroff spaces and let $f:X\rightarrow Y$ be a homeomorphism. Then $x\in X$ is a down (up) weak beat point if and only if $f(x)$ is a down (up) weak beat point.
\begin{proof}
There is no loss of generality in assuming that $x$ is a down weak beat point. If $x$ is an up weak beat point, then the argument is similar. It is easy to see that $f(U_x)=U_{f(x)}$. 
Therefore, $f(U_x\setminus \{x\})=U_{f(x)}\setminus \{f(x)\}$ and we get the desired result.


\end{proof}

\end{lem}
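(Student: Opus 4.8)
The plan is to observe that a homeomorphism between $T_0$ Alexandroff spaces is precisely an isomorphism of the associated posets, and that the property of being a down (up) weak beat point is expressed entirely in terms of the order together with the topology of $U_x$ (resp. $F_x$), both of which are transported by such an isomorphism. So the lemma should follow formally, with essentially no computation.

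First I would reduce to the ``down'' case; the ``up'' case is identical after replacing $U_x$ by $F_x$, or equivalently after passing to the opposite orders on $X$ and $Y$ (which is again a homeomorphism, by the order-reversal symmetry recorded in Proposition \ref{prop:sucesores}). Next, since $f$ is a homeomorphism, both $f$ and $f^{-1}$ are continuous, hence order preserving by the characterization of continuity for maps between Alexandroff spaces; thus $f$ is an isomorphism of posets. In particular $y\le x$ if and only if $f(y)\le f(x)$, and since $U_x=\{y\in X: y\le x\}$ and $U_{f(x)}=\{z\in Y: z\le f(x)\}$, we obtain $f(U_x)=U_{f(x)}$, so that $f$ restricts to a homeomorphism $U_x\setminus\{x\}\to U_{f(x)}\setminus\{f(x)\}$.

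Finally, if $x$ is a down weak beat point then $U_x\setminus\{x\}$ is contractible; since contractibility is invariant under homeomorphism (indeed under homotopy equivalence), $U_{f(x)}\setminus\{f(x)\}$ is contractible, i.e. $f(x)$ is a down weak beat point. Applying the same reasoning to $f^{-1}$ yields the converse. I do not anticipate a genuine obstacle here: the only step that deserves a word of comment is the equality $f(U_x)=U_{f(x)}$, the standard fact that homeomorphisms of Alexandroff spaces preserve minimal open sets, and once that is available the conclusion is immediate.
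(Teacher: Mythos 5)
Your proof is correct and follows essentially the same route as the paper: both rest on the key identity $f(U_x)=U_{f(x)}$ (coming from $f$ being an order isomorphism), from which $f$ restricts to a homeomorphism $U_x\setminus\{x\}\to U_{f(x)}\setminus\{f(x)\}$ and the invariance of contractibility finishes the argument. Your write-up merely makes explicit the steps the paper leaves as ``easy to see.''
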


We recall the notion of a Hasse diagram for a locally finite Alexandroff space $X$. The Hasse diagram $H(X)$ of $X$ is a directed graph. The vertices of $H(X)$ are the points of $X$. There is an edge between two vertices $x$ and $y$ if and only if $x\prec y$ and the orientation of the edge is from the lower element to the upper element. We omit the orientation of the subsequent Hasse diagrams and we assume an upward orientation.

\begin{rem} It is easy to identify beat points of a finite topological space $X$ by looking at its Hasse diagram. A vertex $x$ is a down beat point (resp. up beat point) if there is only one edge that enters (exits) it, i.e.,  $P_x=(a,b)$, where $a=1$ ($b=1$).
\end{rem}

The homotopy and singular homology groups of Alexandroff spaces were studied in \cite{mccord1966singular}.

\begin{df} Let $X$ be an Alexandroff space. Its McCord complex or order complex $\mathcal{K}(X)$ is the simplicial complex whose simplices are the non-empty chains of $X$. Let $L$ be a simplicial complex. The face poset of $L$, denoted by $\mathcal{X}(L)$, is defined to be the poset of simplices of $L$ ordered by inclusion.
\end{df}
\begin{rem} A finite $T_0$ topological space $X$ is said to be collapsible if it collapses to a point. If $X$ is a collapsible finite $T_0$ topological space, then $\mathcal{K}(X)$ is also collapsible.
\end{rem}
The geometric realization of a simplicial complex $K$ is denoted by $|K|$.

\begin{thm}\cite{mccord1966singular} Given an Alexandroff space $X$, there exists a weak homotopy equivalence $f:|\mathcal{K}(X)|\rightarrow X$.
\end{thm}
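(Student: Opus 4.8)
The plan is to construct the McCord map explicitly and then reduce the claim to a local-to-global principle for weak homotopy equivalences. Fixing the convention that the open sets of $X$ are the down-sets, so that the minimal open set $U_x=\{y\in X:\ y\leq x\}$ has $x$ as its maximum, I would define $f:|\mathcal{K}(X)|\to X$ by sending a point $\alpha$ to the minimum of its carrier, i.e. $f(\alpha)=\min(\mathrm{supp}(\alpha))$, where $\mathrm{supp}(\alpha)$ denotes the unique chain whose open simplex contains $\alpha$. (With the opposite, upper-set convention one takes the maximum instead, and the argument is symmetric.)

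First I would check continuity. Since $\{U_x\}_{x\in X}$ is a basis of $X$, it suffices to see that each $f^{-1}(U_x)$ is open. A direct computation gives $f^{-1}(U_x)=\{\alpha:\ \mathrm{supp}(\alpha)\cap U_x\neq\emptyset\}=\bigcup_{y\leq x}\mathrm{st}(y)$, a union of open stars, hence open. The underlying reason is that, as one moves from a point into the open simplices having its carrier as a face, the support can only gain vertices, so the condition $\min(\mathrm{supp})\leq x$ is preserved on a neighbourhood of any point satisfying it.

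Next I would verify the two local hypotheses needed to run McCord's criterion. The family $\mathcal{U}=\{U_x\}$ is a basis-like open cover, since for $y\in U_a\cap U_b$ one has $y\in U_y\subseteq U_a\cap U_b$. For each $x$, the set $U_x$ is contractible by Remark \ref{rem:minimumiscontractible}, as it has the maximum $x$. On the source side, $f^{-1}(U_x)$ deformation retracts onto the closed subcomplex $|\mathcal{K}(U_x)|$ spanned by $U_x$: a point with carrier $C$ (so that $C\cap U_x\neq\emptyset$) is pushed, staying inside the simplex spanned by $C$, by letting the barycentric coordinates on $C\setminus U_x$ tend to $0$ while renormalising onto $C\cap U_x$. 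Because $U_x$ has maximum $x$, the complex $\mathcal{K}(U_x)$ is the simplicial cone with apex $x$, so $|\mathcal{K}(U_x)|$ is contractible, and hence $f^{-1}(U_x)$ is contractible as well. As both $f^{-1}(U_x)$ and $U_x$ are nonempty and weakly contractible, the restriction $f|:f^{-1}(U_x)\to U_x$ induces isomorphisms on all homotopy groups and is therefore a weak homotopy equivalence.

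Finally I would invoke McCord's theorem on basis-like covers: if a continuous map is a weak homotopy equivalence over every member of a basis-like open cover of its target, then it is a weak homotopy equivalence. Applying this to $f$ and $\mathcal{U}=\{U_x\}$ gives the statement. The main obstacle is precisely this last step, that is, establishing (or correctly citing) the local-to-global theorem; its proof is the genuinely topological input, detecting weak equivalences locally over the cover by comparing the two spaces through $\mathcal{U}$ (in modern terms, via the associated homotopy colimit), and it is also where the possible infinitude of $X$ must be handled. By contrast, continuity of $f$ and the contractibility of the pieces $U_x$ and $f^{-1}(U_x)$ are routine combinatorial bookkeeping.
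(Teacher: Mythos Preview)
Your argument is correct and is precisely McCord's original proof: construct the map via the extremal vertex of the carrier, verify continuity using open stars, show both $U_x$ and $f^{-1}(U_x)$ are contractible, and conclude with the basis-like cover criterion. Note, however, that the paper does not supply a proof of this statement at all; it is quoted from \cite{mccord1966singular} as background, so there is nothing in the paper to compare your approach against beyond the citation itself.
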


\begin{thm}\cite{mccord1966singular} Given a simplicial complex $L$, there exists a weak homotopy equivalence $f:|L|\rightarrow \mathcal{X}(L)$.
\end{thm}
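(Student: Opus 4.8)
The plan is to deduce this second realization theorem from the first McCord theorem (the weak homotopy equivalence $|\mathcal{K}(X)|\to X$ for an arbitrary Alexandroff space $X$), which has already been stated, by applying it to the Alexandroff space $X=\mathcal{X}(L)$ and then identifying $\mathcal{K}(\mathcal{X}(L))$ geometrically. The crucial observation is that the order complex of the face poset is exactly the barycentric subdivision of $L$. Indeed, by definition a simplex of $\mathcal{K}(\mathcal{X}(L))$ is a non-empty chain $\sigma_0\subsetneq\sigma_1\subsetneq\cdots\subsetneq\sigma_k$ of simplices of $L$ ordered by inclusion, which is precisely the combinatorial data of a simplex of the barycentric subdivision $L'$ (whose vertices are the barycenters $\hat\sigma_0,\dots,\hat\sigma_k$). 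Hence $\mathcal{K}(\mathcal{X}(L))=L'$ as abstract simplicial complexes, and therefore $|\mathcal{K}(\mathcal{X}(L))|=|L'|$.

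From here the argument is short. I would invoke the standard fact that the geometric realization of a simplicial complex is canonically homeomorphic to that of its barycentric subdivision, $|L|\cong|L'|$, the homeomorphism carrying each formal barycenter $\hat\sigma$ to the actual barycenter of the geometric simplex $|\sigma|\subseteq|L|$ and being affine on each cell of $L'$. Applying the first McCord theorem to $X=\mathcal{X}(L)$ produces a weak homotopy equivalence $f_{\mathcal{X}(L)}\colon|\mathcal{K}(\mathcal{X}(L))|=|L'|\to\mathcal{X}(L)$. Precomposing with the homeomorphism $|L|\xrightarrow{\ \cong\ }|L'|$ yields the desired weak homotopy equivalence $f\colon|L|\to\mathcal{X}(L)$, since a homeomorphism is in particular a weak homotopy equivalence and these are closed under composition. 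Note that this route is insensitive to the choice of upper- versus lower-set convention in the definition of the Alexandroff topology on $\mathcal{X}(L)$, because the chains defining $\mathcal{K}(\mathcal{X}(L))$ do not depend on the direction of the order.

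Alternatively, one can give the direct construction, which is in fact how the first McCord theorem is itself proved. Define $f\colon|L|\to\mathcal{X}(L)$ by sending a point $x$ to its carrier, the unique open simplex of $L$ containing $x$. With the order convention for which the minimal open set $U_\sigma$ is the set of cofaces of $\sigma$, the preimage $f^{-1}(U_\sigma)$ is the open star of $\sigma$ in $|L|$, which is open; this gives continuity. The engine is McCord's basis-cover criterion: if the minimal open sets $\{U_\sigma\}$ form a basis of the target (which they always do for an Alexandroff space) and each restriction $f\colon f^{-1}(U_\sigma)\to U_\sigma$ is a weak homotopy equivalence, then $f$ is one. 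Here $f^{-1}(U_\sigma)$ is an open star, hence star-shaped and contractible, while $U_\sigma$ has minimum $\sigma$ and so is contractible by Remark~\ref{rem:minimumiscontractible}; each restriction is thus trivially a weak homotopy equivalence, and the criterion applies.

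I expect the main obstacle to be the basis-cover criterion itself, which is the single genuinely nontrivial ingredient and rests on a nerve- or Mayer–Vietoris-type gluing argument for weak homotopy equivalences. In the route I favour, however, it never has to be reproven: its content is entirely encapsulated in the already-stated first McCord theorem, so the whole weight of the proof is carried by the identification $\mathcal{K}(\mathcal{X}(L))=L'$ and the homeomorphism $|L|\cong|L'|$, both of which are routine.
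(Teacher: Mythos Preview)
Your argument is correct, but there is nothing in the paper to compare it to: the theorem is stated in Section~\ref{section:preliminaries} as a background result with the citation \cite{mccord1966singular} and is given no proof whatsoever. The paper simply quotes McCord's two realization theorems and moves on.

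For what it is worth, both routes you sketch are standard and sound. The first one---applying the already-stated weak equivalence $|\mathcal{K}(X)|\to X$ to $X=\mathcal{X}(L)$ and then invoking the identification $\mathcal{K}(\mathcal{X}(L))=L'$ together with $|L|\cong|L'|$---is the cleanest way to deduce this statement from its companion and avoids re-proving the basis-cover criterion. The second route, via the carrier map and McCord's open-cover lemma, is essentially McCord's original proof. Either would serve as a self-contained justification; the paper itself is content to cite the result.
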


Finally, we recall some remarks and a definition. For a more complete treatment we refer to the reader to \cite{barmak2011algebraic}.
\begin{df}\label{df:non-hausdorffsuspension} The non-Hausdorff join $X\circledast Y$ of two Alexandroff spaces  $X$ and $Y$ is the disjoint union $X\sqcup Y$ keeping the given ordering within $X$ and $Y$ and setting $y\leq x$ for every $x\in X$ and $y\in Y$.
\end{df}
\begin{rem}\label{rem:nonhausdorffsuspensioncontractible} Given two Alexandroff spaces $X$ and $Y$, $\mathcal{K}(X\circledast Y)= \mathcal{K}(X)\ast \mathcal{K}(Y)$, where $\ast$ denotes the usual join of simplicial complexes. If $X$ and $Y$ are finite topological spaces and one of them is collapsible, then $\mathcal{K}(X\circledast Y)$ is collapsible.
\end{rem}
\begin{prop}\label{non-hausdorffsuspensionAut} Let $X$ and $Y$ be two Alexandroff spaces. Then $Aut(X\circledast Y)=Aut(X)\times Aut(Y)$.
\end{prop}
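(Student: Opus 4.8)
The plan is to build the obvious map from $Aut(X)\times Aut(Y)$ into $Aut(X\circledast Y)$ and then show it is onto. Given $f\in Aut(X)$ and $g\in Aut(Y)$, let $f\sqcup g\colon X\circledast Y\to X\circledast Y$ be the bijection equal to $f$ on $X$ and to $g$ on $Y$. By Definition \ref{df:non-hausdorffsuspension} every order relation of $X\circledast Y$ is either internal to $X$, internal to $Y$, or of the form $y\le x$ with $y\in Y$ and $x\in X$; the first two kinds are preserved by $f\sqcup g$ because $f$ and $g$ preserve order, and a relation $y\le x$ is sent to $g(y)\le f(x)$, which holds since $g(y)\in Y$ and $f(x)\in X$. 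The inverse $f^{-1}\sqcup g^{-1}$ is handled the same way, so $f\sqcup g$ is a homeomorphism, and $(f,g)\mapsto f\sqcup g$ is an injective group homomorphism $\Phi\colon Aut(X)\times Aut(Y)\to Aut(X\circledast Y)$ (both the homomorphism property and injectivity being immediate from the componentwise formula). It then remains only to prove that $\Phi$ is surjective.

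So let $h\in Aut(X\circledast Y)$. It is enough to show that $h(X)=X$ and $h(Y)=Y$: in that case $h|_X$ and $h|_Y$ are homeomorphisms of $X$ and of $Y$ (their inverses are the corresponding restrictions of $h^{-1}$) and $h=\Phi(h|_X,h|_Y)$. To obtain this I would separate the two ``layers'' by a homeomorphism invariant, the natural candidate being the height. For $y\in Y$, no point of $X$ lies below $y$, so the minimal open set $U_y$ of $X\circledast Y$ equals the minimal open set of $y$ computed inside $Y$; hence $ht(y)\le ht(Y)$. For $x\in X$, all of $Y$ lies below $x$, so $U_x$ is the disjoint union of $Y$ and the minimal open set of $x$ inside $X$, and concatenating a maximal chain of $Y$ with a maximal chain of the latter shows $ht(x)\ge ht(Y)+1$. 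Thus, as long as $ht(Y)$ is finite, every point of $X$ has strictly larger height than every point of $Y$; since $h$ and $h^{-1}$ preserve height by Proposition \ref{prop:sucesores}, this forces $h(Y)=Y$ and $h(X)=X$, so $\Phi$ is surjective and $Aut(X\circledast Y)=Aut(X)\times Aut(Y)$.

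The step I expect to be the crux is exactly this layer-preservation, since the height comparison is conclusive only when $ht(Y)<\infty$. That holds whenever $X\circledast Y$ is finite, which is the situation in all the later uses of this proposition, and, running the dual argument with the closure sets $F_p$ in place of the $U_p$, it also suffices that $X$ have finite height for the opposite order. In full generality the invariant need not separate the layers: if $Y$ has an infinite descending chain and $X$ an infinite ascending one, every point of either layer can have infinite height, and in fact the bare statement then fails — for instance $X=\mathbb{N}$ and $Y=\mathbb{N}$ with the reversed order give $X\circledast Y$ order-isomorphic to $\mathbb{Z}$, whose homeomorphism group is infinite while $Aut(X)\times Aut(Y)$ is trivial. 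So for the infinite case one must either add a finiteness hypothesis or argue with a finer feature, such as the incomparability relation (which never relates a point of $X$ with a point of $Y$); but in the cases needed here the height argument closes the proof directly.
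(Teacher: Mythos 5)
The paper states Proposition \ref{non-hausdorffsuspensionAut} without proof, so there is no argument of the authors to set yours against; the statement is imported from the finite-space setting (cf. \cite{barmak2011algebraic}). Your proof is the expected one and is correct under the hypothesis you isolate: $\Phi(f,g)=f\sqcup g$ is a well-defined injective homomorphism because every cross relation $y\le x$ ($y\in Y$, $x\in X$) is automatically preserved, and surjectivity follows once every $h\in Aut(X\circledast Y)$ preserves the two layers, which your height computation ($ht(y)\le ht(Y)$ for $y\in Y$ since $U_y\subseteq Y$, and $ht(x)\ge ht(Y)+1$ for $x\in X$ since $Y\subseteq U_x$) combined with Proposition \ref{prop:sucesores} yields whenever $ht(Y)<\infty$, and dually, via the sets $F_p$, whenever $ht(X)<\infty$.

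Your closing observation is the substantive point, and it is right: for arbitrary Alexandroff spaces the proposition as printed is false. With $X=\mathbb{N}$ (usual order) and $Y=\mathbb{N}$ with the opposite order, $X\circledast Y$ is the chain $\mathbb{Z}$; its self-homeomorphisms are exactly its order automorphisms (an order-reversing bijection sends lower sets to upper sets, hence is not continuous), i.e. the translations, so $Aut(X\circledast Y)\simeq\mathbb{Z}$ while $Aut(X)\times Aut(Y)$ is trivial. Thus a hypothesis such as finite height of one factor (in particular, finiteness of the spaces) must be added, and no choice of invariant can remove it; in particular your fallback suggestion of using incomparability cannot work in general, since in the counterexample both factors are chains and the statement still fails. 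The defect lies in the paper's statement rather than in your argument, and it is harmless downstream: the proposition is invoked only in Proposition \ref{prop:circfuera} and in the proof of Theorem \ref{thm:homologygroupsAutE}, where $G$ and $H$ are finite and all spaces being joined are finite, a case your proof covers completely.
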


\section{Examples and motivation of the proof of Lemma \ref{thm:teoremaPrincipal}}\label{sec:example}
We present an example to illustrate the idea of the construction given in the proof of Lemma \ref{thm:teoremaPrincipal}.

\begin{ex}\label{ex:primerEjemplo} Let us consider the Klein four-group $\mathbb{Z}_2\oplus \mathbb{Z}_2$, where we denote  $g_1=(0,0), g_2=(1,0),g_3=(0,1)$ and $g_4=(1,1)$, and the cyclic group of two elements $\mathbb{Z}_2$, where we denote $h_1=0$ and $h_2=1$. We also denote $G=\mathbb{Z}_2\oplus \mathbb{Z}_2$ and $H=\mathbb{Z}_2$ for simplicity.  Moreover, let $S_{G}=\{ g_2, g_3\}$, $S_{H}=\{h_2 \}$ be generating sets of $G,H$ respectively. We declare $g_2<g_3$. Our goal is to find a finite $T_0$ topological space $X_H^G$ such that $Aut(X_H^G)\simeq G$ and $\mathcal{E}(X_H^G)\simeq H$.

By \cite{barmak2009automorphism,chocano2020topological}, there exists a finite $T_0$ topological space $X^G$ satisfying that $Aut(X^G)$ is isomorphic to $G$. In Figure \ref{fig:EjemploSeparado1} we have represented in blue the Hasse diagram of $X^G$. It is clear that adding to $X^G$ a minimum $*$, i.e., $X_*^G=X^G \cup \{* \}$ with $*<x$ for every $x\in X_G$, we get that $\mathcal{E}(X^G_*)$ is trivial since $X_*^G$ is contractible. On the other hand, if $f\in Aut(X^G_*)$, then we have that $f(*)=*$ because $*$ is a minimum. Thus we deduce that $Aut(X^G_*)$ is isomorphic to $Aut(X^G)$. Our next goal is to find a topological space $X_H^*$ satisfying that $Aut(X_H^*)$ is trivial and $\mathcal{E}(X_H^*)$ is isomorphic to $ H$. Again, by \cite{chocano2020topological}, there exists a finite $T_0$ topological space $X_H$ satisfying that $\mathcal{E}(X_H)\simeq Aut(X_H)\simeq H$. In Figure \ref{fig:EjemploSeparado1}, the Hasse diagram of $X_H$ corresponds to the red and black parts of the diagram on the right. We modify $X_H$ in order to reduce the number of self-homeomorphisms without changing the number of self-homotopy equivalences. For this purpose we add some points to $X_H$. We consider $X_H^*=X_H\cup \{w_{h_1},w_{h_2},a\}$, where we have the following relations: $A_{(h_1,0)}\prec w_{h_1}$ and $A_{(h_2,0)}\prec w_{h_2}\prec a$. The Hasse diagram of $X_H^*$ can be seen on the right on Figure \ref{fig:EjemploSeparado1}, where the new points are pictured in orange. It is easy to check that $\mathcal{E}(X_H^*)\simeq \mathcal{E}(X_H)\simeq H$. The new points are beat points so we can remove them without changing the homotopy type of $X_H^*$. Hence, we have that $X_H^*$ and $X_H$ have the same homotopy type. We prove that $Aut(X_H^*)$ is trivial. If $f\in Aut(X_H^*)$, then $f(M)=M$ and $f(N)=N$, where $M$ and $N$ denote the set of maximal and minimal elements of $X_H^*$ respectively. From this, using Proposition \ref{prop:sucesores} and the fact that $f$ preserves heights, we deduce that $f$ is the identity.



\begin{figure}[h]\center
\includegraphics[scale=0.82]{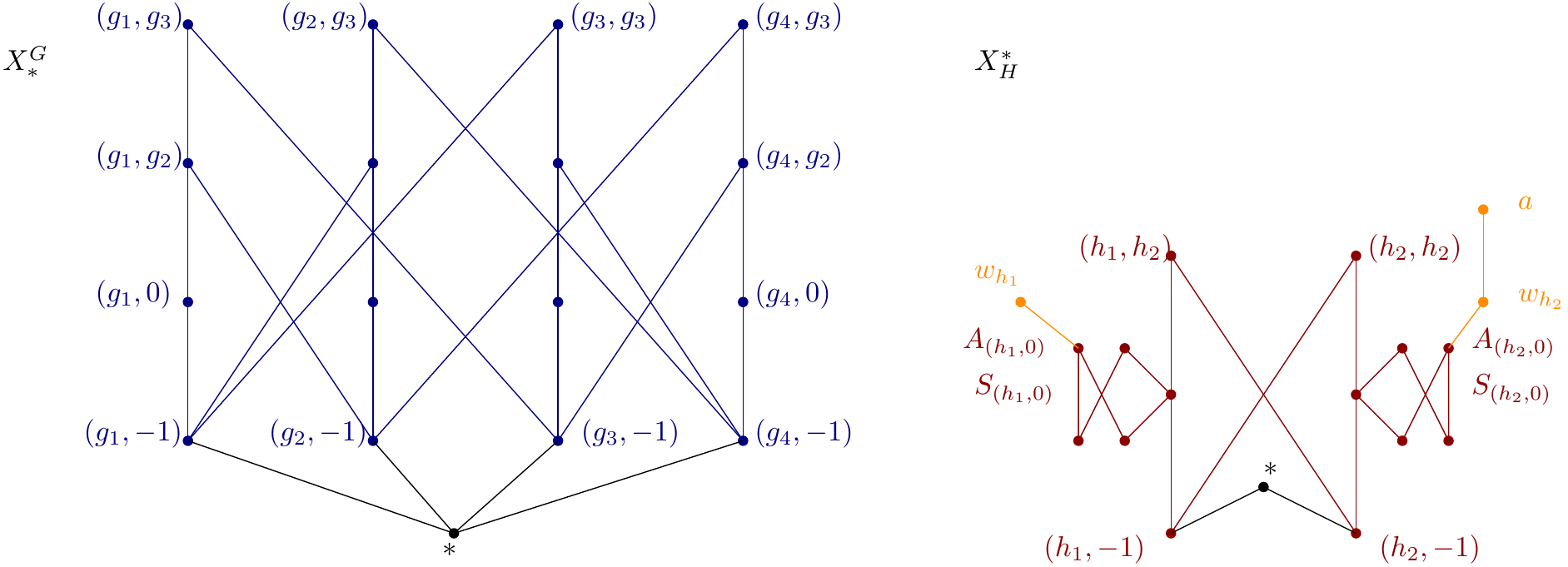}
\caption{Hasse diagrams of $X_*^G$ and $X_H^*$.}
\label{fig:EjemploSeparado1}
\end{figure}

Combining $X_H^*$ with $X_*^G$ we obtain $X_H^G$. We identify the point $*$ of $X_H^*$ and $X_*^G$. We also extend the partial order of the two previous posets using transitivity, that is, if $x\in X_H^*$ and $y\in X_*^G$, then we have that $x< y$ if and only if $x<*<y$. It is not difficult to check that $X_H^G$ satisfies the properties required at the beginning. $X_H^G$ and $X_H$ have the same homotopy type because we can collapse $X_*^G$ to $*$. We thus get that $\mathcal{E}(X_H^G)\simeq \mathcal{E}(X_H)\simeq H$. Since $*$ is the only point with height $1$ and $P_*=(2,4)$, it follows that $*$ is a fixed point for every $f\in Aut(X_H^G)$. By the continuity of $f$, it is easily seen that $f(X^G_*)=X^G_*$ and $f(X_H^*)=X_H^*$. From this, we conclude that $Aut(X_H^G)$ is isomorphic to $G$.
\begin{figure}[h]\center
\includegraphics[scale=0.82]{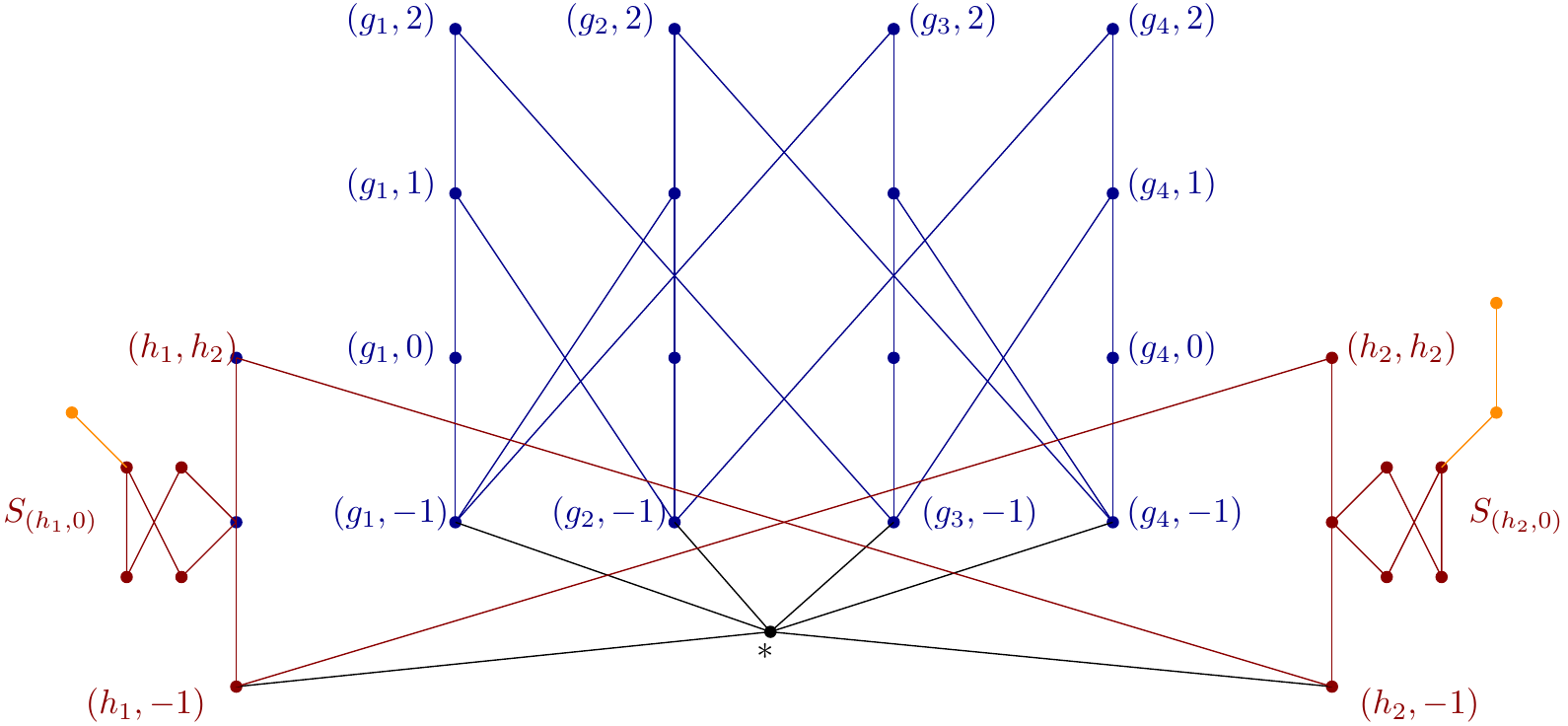}
\caption{Hasse diagram of $X_H^G$.}
\label{fig:Ejemplojunto}
\end{figure}

We can also consider what we call the dual case, that is, $X_G^H$, where we have that $Aut(X_G^H)\simeq H$ and $\mathcal{E}(X_G^H)\simeq G$. We can now proceed analogously to the previous arguments, i.e., we find $X_G^*$ and $X_*^H$. In Figure \ref{fig:Ejemploseparadodual} we have the Hasse diagrams of $X_*^H$ and $X_G^*$. 

\begin{figure}[h]\center
\includegraphics[scale=0.82]{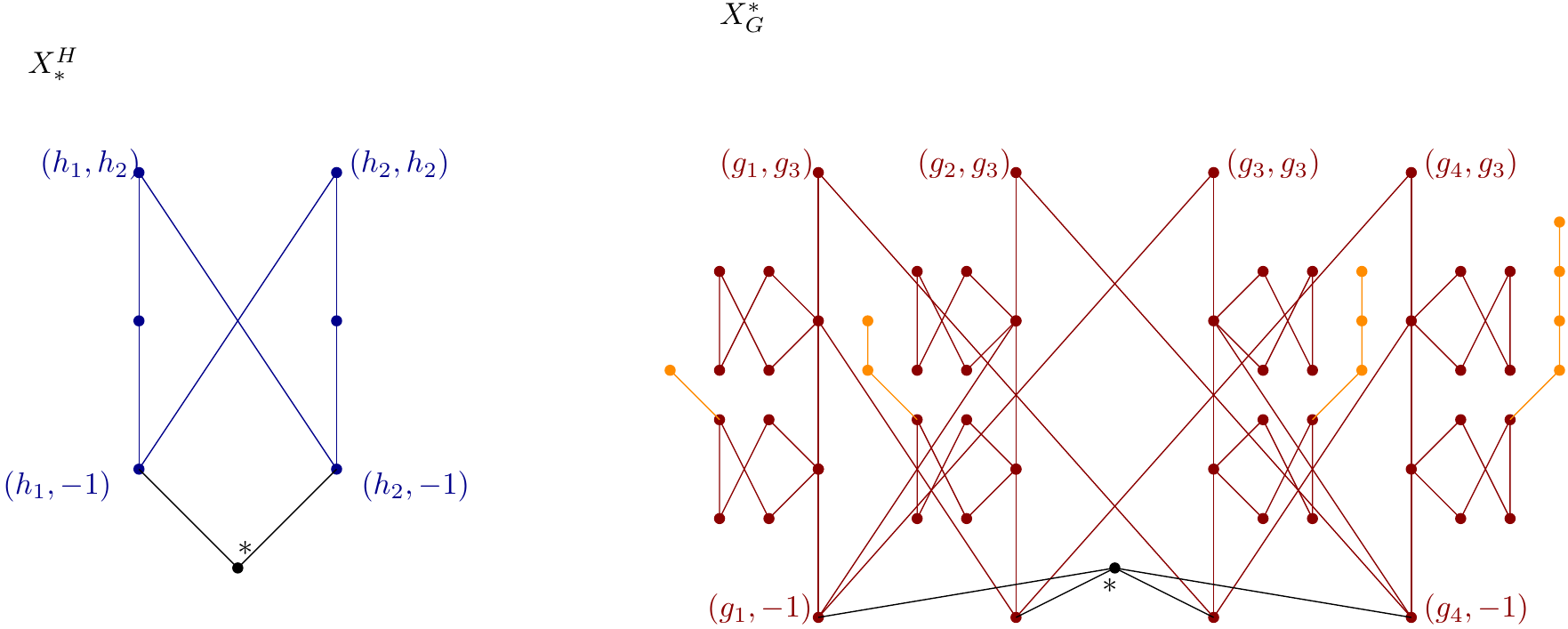}
\caption{Hasse diagrams of $X_*^H$ and $X_G^*$.}
\label{fig:Ejemploseparadodual}
\end{figure}
\newpage

Again, combining $X_*^H$ with $X_G^*$ we get $X_G^H$, that is, the finite topological space given by the Hasse diagram of Figure \ref{fig:Ejemplojuntodual}. It is trivial to verify that $X_G^H$ is not homeomorphic to $X_H^G$ because of their different cardinality. Furthermore, $X_H^G$ and $X_G^H$ are not homotopy equivalent since $\mathcal{E}(X_H^G)$ is not isomorphic to $\mathcal{E}(X_G^H)$. Another way to prove the last assertion is the following. After removing one by one the beat points of $X_H^G$ we get $X_H$; after removing one by one the beat points of $X_G^H$ we get $X_G$. However, $X_G$ is not homeomorphic to $X_H$ because of their different cardinality. By Theorem \ref{thm:stonghomeoeshomoeq} we conclude that $X_H^G$ and $X_G^H$ are not homotopy equivalent. Moreover, studying their McCord complexes it can be shown that $X_H^G$ and $X_G^H$ are not weak homotopy equivalent.
\begin{figure}[h]\center
\includegraphics[scale=0.82]{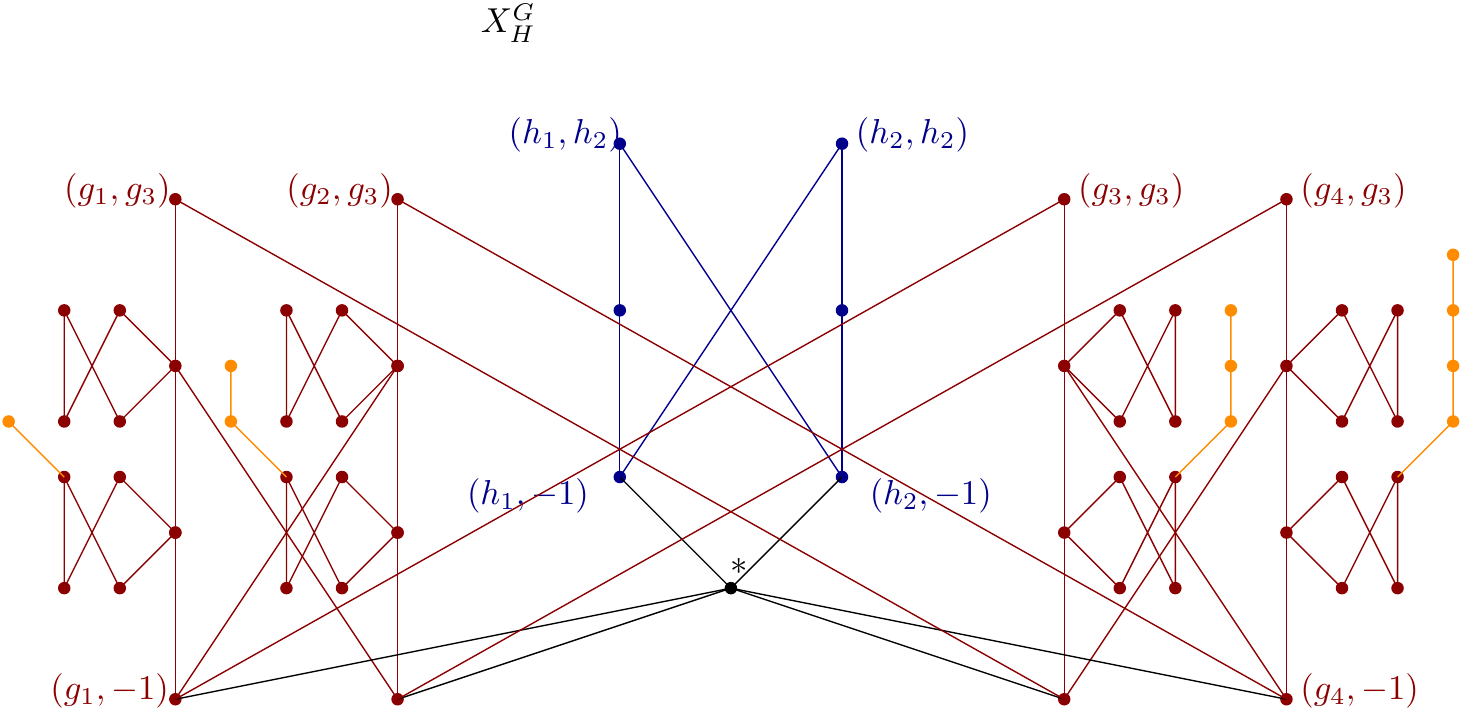}
\caption{Hasse diagram of $X_G^H$.}
\label{fig:Ejemplojuntodual}
\end{figure}

\end{ex}

\section{Proof of Lemma \ref{thm:teoremaPrincipal} and remarks}\label{sec:proofMainTheorem}
\begin{proof}[\textbf{Proof of Lemma \ref{thm:teoremaPrincipal}}]
We follow the same strategy as in Example \ref{ex:primerEjemplo}. Firstly, we find a topological space $X_H^*$ such that $\mathcal{E}(X_H^*)\simeq H$ and $Aut(X_H^*)$ is trivial. Secondly, we find a topological space $X_*^G$ such that $Aut(X_*^G)\simeq G$ and $\mathcal{E}(X_*^G)$ is trivial. Finally, we combine properly both topological spaces to obtain a topological space $X_H^G$ satisfying that $Aut(X_H^G)\simeq G$ and $\mathcal{E}(X_H^G)\simeq H$. 

We first assume that $G$ and $H$ are non-trivial groups. The trivial case will be considered later.

\underline{\textit{Construction of $X_*^G$ and properties.}} We consider a set of non-trivial generators $S'_G$  for $G$. Without loss of generality we can consider a well-order on $S'_G$ satisfying that if $max(S'_G)$ exists and $|S'_G|>1$, then there exists $\alpha\in S'_G$ satisfying $\alpha\prec max(S'_G)$. If $max(S'_G)$ exists and there is no $\alpha\in S'_G$ satisfying $\alpha\prec max(S'_G)$, then the well-order defined on $S'_G$ can be modified as follows: $max(S'_G)<\alpha$ for every $\alpha\in S'_G\setminus \{max(S'_G) \}$ and the rest of the relations defined on $S'_G\setminus \{max(S'_G) \}$ unaltered. It is obvious that the new partial order defined on $S'_G$ is indeed a well-order and $max(S'_G)$ does not exist. We consider $S_G=S'_G\cup \{0,-1 \}$, where we assume that $-1,0\notin S'_G$, and extend the well-order defined on $S'_G$ to $S_G$ as follows: $-1<0<\alpha$ for every $\alpha\in S'_G$. We consider 
$$X^G_*=(G\times S_G)\cup \{*\},$$
where we have the following relations:
\begin{itemize}
\item $(g,\alpha)<(g,\delta)$ if $1\leq \alpha<\delta$, where $g\in G$ and $\alpha,\delta\in S_G$.
\item $(g\alpha,-1)\prec (g,\alpha)$, where $g\in G$ and $\alpha\in S_G\setminus \{-1,0\}$.
\item $*\prec (g,-1)$, where $g\in G$.
\end{itemize}
The rest of the relations can be deduced from the above relations using transitivity. It is easy to check that $X_*^G$ is a partially ordered set.

We prove that $Aut(X_*^G)\simeq G$ and $\mathcal{E}(X_*^G)$ is the trivial group. We have that $\mathcal{E}(X_*^G)$ is the trivial group because $X_*^G$ is contractible to $*$, which is a minimum. Since $*$ is a minimum, it follows that every self-homeomorphism must fix this point. From this we deduce that $Aut(X_*^G)\simeq Aut(X_*^G\setminus \{* \})$. In addition, $X_*^G\setminus \{* \}$ is the same topological space considered in \cite[Section 3]{chocano2020topological} and denoted by $X_G$. Hence, we know that $Aut(X_*^G)\simeq Aut(X_G)\simeq G$, where $\varphi: G\rightarrow Aut(X_G)$ is given by $\varphi(s)(g,\alpha)=(sg,\alpha)$ and is an isomorphism of groups.

\underline{\textit{Construction of $X_H^*$ and properties.}} We consider a set of non-trivial generators $S'_H$ for $H$. There is no loss of generality in assuming that there exists a well-order on $S'_H$ satisfying that if $max(S'_H)$ exists and $|S'_H|>1$, then there exists $\alpha\in S_H$ satisfying $\alpha\prec max(S'_H)$. We repeat the same construction  made before, that is, we consider $S_H=S'_H\cup \{0,-1 \}$, where we assume that $-1,0\notin S'_H$, and extend the well-order defined on $S'_H$ to $S_H$ as follows: $-1<0<\beta$ for every $\beta\in S'_H$.

For every $h\in H$ we take a well-ordered non-empty set $W_h$ such that $W_h$ is isomorphic to $W_t$ if and only if $h=t$. For every $h\in H$ let $w_h\in W_h$ denote the first element or minimum of $W_h$. We consider 
$$X_H^*=(H\times S_H)\cup (\bigcup_{\substack{{(h,\beta)\in G\times S_H} \\ 0\leq \beta <max( S_H)}} (S_{(h,\beta)}\cup T_{(h,\beta)})\cup (\bigcup_{h\in H} W_h))\cup \{ *\},$$
where 
$$S_{(h,\beta)}=\{A_{(h,\beta)},B_{(h,\beta)},C_{(h,\beta)},D_{(h,\beta)} \}, T_{(h,\beta)}=\{E_{(h,\beta)},F_{(h,\beta)},G_{(h,\beta)},H_{(h,\beta)},I_{(h,\beta)},J_{(h,\beta)} \},$$
and we have the following relations:
\begin{enumerate}
\item $(h,\beta)<(h,\gamma)$ if $-1\leq \alpha<\gamma$, where $h\in H$ and $\beta,\gamma\in S_H$.
\item $(h\beta,-1)\prec (h,\beta)$, where $h\in H$ and $\beta\in S_H\setminus \{-1,0\}$.
\item $A_{(h,\beta)}\succ C_{(h,\beta)},D_{(h,\beta)}$; $B_{(h,\beta)}\succ (h,\beta), C_{(h,\beta)}$ and $(h,\beta)\succ D_{(h,\beta)}$, where $h\in H$ and $\beta\in S_H\setminus \{-1\}$.
\item $E_{(h,\beta)}\succ {(h,\beta)},I_{(h,\beta)}$; $F_{(h,\beta)}\succ H_{(h,\beta)},J_{(h,\beta)}$; $G_{(h,\beta)}\succ I_{(h,\beta)},J_{(h,\beta)}$ and $(h,\beta)\succ H_{(h,\beta)}$, where $h\in H$ and $\beta\in S_H\setminus \{-1\}$.
\item $*\succ (h,-1)$, where $h\in H$.
\item We extend the partial order defined on $W_h$ to $X_H$ declaring that $A_{(h,0)}\prec w_h$, where $h\in H$.
\end{enumerate}
The remaining relations can be deduced from the above using transitivity. It is routine to verify that $X_H^*$ with the previous relations is a partially ordered set.

\begin{figure}[h]\center
\includegraphics[scale=1]{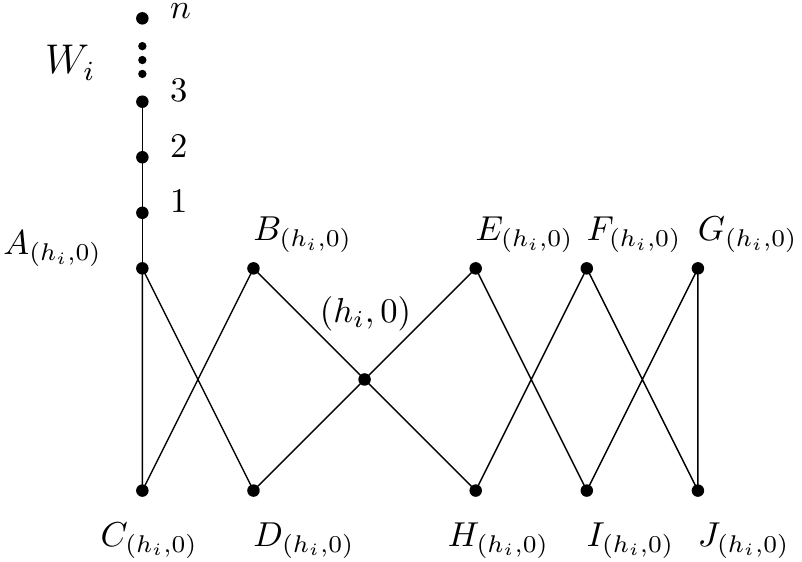}
\caption{Hasse diagram of $S_{(h,0)}\cup T_{(h,0)}\cup W_h$, where $W_h$ is a finite well-ordered set.}\label{fig:AspasNoFinitas}

\end{figure}

We proceed to show that $Aut(X_H^*)$ is the trivial group and $\mathcal{E}(X_H^*)\simeq H$. It is clear that $X_H^*$ and $X_H^*\setminus \{ W_h|h\in H\}$ have the same homotopy type. We define $r:X_H^*\rightarrow X_H^*\setminus \{ W_h|h\in H\}$ given by 
\[ r(x) = \begin{cases} 
          A_{(h,0)} & x\in W_h \\
          x & x\in X_H^*\setminus \{ W_h|h\in H\}.
       \end{cases}
    \]
It is trivial to show that $r$ is continuous and satisfies that $r(x)\leq id(x)$ for every $x\in X_H^*$, where $id:X_H^*\rightarrow X_H^*$ denotes the identity map. This implies that $X_H^*\setminus \{ W_h|h\in H\}$ is a strong deformation retract of $X_H^*$. On the other hand, $X_H^*\setminus \{ W_h|h\in H\}$ is the same topological space considered in \cite[Section 3]  {chocano2020topological} and denoted by $\overline{X}^*_H$. Therefore we know that $\mathcal{E}(X_H^*)\simeq \mathcal{E}(\overline{X}_H^*)\simeq  Aut(\overline{X}_H^*)\simeq H$, where $\phi:H\rightarrow Aut(\overline{X}_H^*)$ given by $\phi(t)(h,\beta)=(th,\beta)$ and $\phi(t)(S_{(h,\beta)}\cup T_{(h,\beta)})= S_{(th,\beta)}\cup T_{(th,\beta)}$ is an isomorphism of groups. 

The task is now to prove that $Aut(X_H^*)$ is the trivial group. Let us take $f\in Aut(X_H^*)$. We consider $A_{(h,0)}$ for some $h\in H$. Since $F_{A_{(h,0)}}\setminus \{A_{(h,0)}\}$ has a minimum $w_h$, it follows that $A_{(h,0)}$ is an up beat point. By Lemma \ref{lem:beatPoints}, we know that $f(A_{(h_i,0)})$ is also an up beat point. Therefore, $f(A_{(h_i,0)})$ is of the form $A_{(t,0)}$ for some $t\in H$. By Proposition \ref{prop:sucesores} we get that $f(w_h)=w_t$. It follows from the continuity of $f$ that $f(W_h)\subseteq W_t$. Since $f$ is a homeomorphism, we have that $f_{|W_h}$ is also a homeomorphism. Therefore we get that $h=t$; otherwise we would get a contradiction since $W_h$ is homeomorphic to $W_t$ if and only if $h=t$. Using Proposition \ref{prop:sucesores} it is easy to verify that $f$ fixes $S_{(h,0)}$ for every $h\in H$. On the other hand, \cite[Remark 4.2]{chocano2020topological} says that if a homeomorphism $g:X_H^*\setminus \{ W_h|h\in H\}\rightarrow X_H^*\setminus \{ W_h|h\in H\}$ coincides at one point with the identity map, then $g$ is the identity map. Thus, we can deduce that $f$ is the identity map and $Aut(X_H^*)$ is the trivial group.

\underline{\textit{Construction of $X_H^G$}}. We consider $X_H^G=X_H^*\cup X_*^G$, where we are identifying the point $*$ of both topological spaces, i.e., the partial order of $X_H^G$ preserves the relations defined on $X_H^*$ and $X^G_*$:
\begin{itemize}
\item If $x\in X_H^*$ and $y\in X_*^G$, then $x$ is smaller than $y$ if and only if $x\leq *$ and $*\leq y$.
\item If $x,y\in X_H^*$, then $x$ is smaller (greater) than $y$ if and only if $x$ is smaller (greater) than $y$ with the partial order defined on $X_H^*$.
\item If $x,y\in X_*^G$, then $x$ is smaller (greater) than $y$ if and only if $x$ is smaller (greater) than $y$ with the partial order defined on $X_*^G$.
\end{itemize}
It is evident that $\mathcal{E}(X_H^G)$ is isomorphic to $ H$ because $X_*^G$ is contractible to $*$ and $X_H^*$ is homotopy equivalent to $\overline{X}_H^*$. It suffices to show that $Aut(X_H^G)$ is isomorphic to $G$. We verify that every $f\in Aut(X_H^G)$ satisfies that $f(x)\in X_H^*$ for every $x\in X_H^*$ and $f(x)\in X_*^G$ for every $x\in X_*^G$. Firstly, we show that $*$ is a fixed point for every homeomorphism $f$. We have $ht(*)=1$. Since for every $x\in X_*^G\setminus \{* \}$ the height of $x$ is at least $2$ or different from $1$, it follows that $f(*)\notin X_*^G\setminus \{* \}\subset X_H^G$. The only elements of $X_H^*$ that have height one are of the form $*$ or $(h,0)$ or $A_{(s,\alpha)}$, $F_{(s,\alpha)}$, $E_{(s,\alpha)}$ for some $h,s\in H$ and $\alpha\in S_H\setminus \{-1\}$. We can discard the maximal elements, otherwise, $f^{-1}$ would send a maximal element to a non-maximal element. If $f(*)=A_{(h,0)}$, then we get a contradiction since $A_{(h,0)}$ is an up beat point. If $f(*)=(h,0)$ for some $h\in H$, then we get that $f^{-1}(E_{(h,0)})\succ f^{-1}(h,0)=*$ by Proposition \ref{prop:sucesores}. By Lemma \ref{lem:beatPoints} we have that $f^{-1}(E_{(h,0)})\neq (g,-1)$ for every $g\in G$ because $E_{(h,0)}$ is not a down beat point. Hence, the only possibility is $f(*)=*$. Finally, by the continuity of $f$, we get that $f(x)\in X_*^G\subset X_H^G$ for every $x\in X_*^G\subset X_H^G$. This implies that $Aut(X_H^G)$ is isomorphic to $G$.

We prove the remaining case. If $G$ is the trivial group, then it suffices to consider $X_H^*$ to conclude. If $H$ is the trivial group, then $X_*^G$ satisfies the desired properties.
\end{proof}
\begin{rem}\label{rem:coinciden} If $f,k\in Aut(X_H^G)$ are such that there exists $x\in X_*^G\setminus \{ *\}$ satisfying $f(x)=k(x)$, then $f=k$. This is a consequence of the isomorphism of groups $\varphi$ given in the proof of Lemma \ref{thm:teoremaPrincipal}. Similarly, if $[f],[k]\in \mathcal{E}(X_H^G)$ are such that there exists $x\in X_H^G\setminus (\{X_*^G \}\cup \{ W_h|h\in H\})$ satisfying that $f(x)=k(x)$, where $f\in [f]$ and $k\in [k]$, then $f=k$ and $[f]=[k]$. This is a consequence of the construction of $X_H^G$ and $\phi$ given in the proof of Lemma \ref{thm:teoremaPrincipal}.
\end{rem}
\begin{prop}\label{prop:WeakHomotopyTypeArbitraryGroups} Let $G$ and $H$ be groups. Then the Alexandroff space $X_H^G$ obtained in the proof of Lemma \ref{thm:teoremaPrincipal} has the weak homotopy type of the wedge sum of $3|H||S_H|$ circles when $H$ is a finite group and the wedge sum of $|\mathbb{N}|$ circles when $H$ is a non-finite countable set.
\end{prop}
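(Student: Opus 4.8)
The plan is to match the weak homotopy type of $X_H^G$ with that of the much smaller space $\overline{X}_H^*$ and then to apply McCord's theorem. First I would observe that the two reductions used in the proof of Lemma \ref{thm:teoremaPrincipal} are genuine homotopy equivalences of Alexandroff spaces, not merely weak equivalences. Indeed, the map $\rho\colon X_H^G\to X_H^*$ which is the identity on $X_H^*$ and collapses all of $X_*^G$ to the point $*$ is order preserving, hence continuous, and satisfies $\rho(x)\le x$ for every $x$; therefore $\rho$ is homotopic to the identity of $X_H^G$ and $X_H^*$ is a deformation retract of $X_H^G$. Composing with the retraction $r\colon X_H^*\to\overline{X}_H^*$ already constructed in that proof gives $X_H^G\simeq X_H^*\simeq\overline{X}_H^*$, so $X_H^G$ and $\overline{X}_H^*$ have the same weak homotopy type, namely, by McCord's theorem \cite{mccord1966singular}, that of the polyhedron $|\mathcal{K}(\overline{X}_H^*)|$, whose simplices are the non-empty chains of $\overline{X}_H^*$.

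The second step is to show that $|\mathcal{K}(\overline{X}_H^*)|$ is homotopy equivalent to a connected graph. The space $\overline{X}_H^*$ is exactly the one studied in \cite{chocano2020topological}, so one may quote the description of its weak homotopy type given there, or argue directly: $\overline{X}_H^*$ is path connected, since $*$ links every column $\{(h,\beta)\mid\beta\in S_H\}$ and each gadget $S_{(h,\beta)}\cup T_{(h,\beta)}$ is glued to the column of $h$ at the vertex $(h,\beta)$; the order complex of the sub-poset $(H\times S_H)\cup\{*\}$ is contractible; and the order complex of each $S_{(h,\beta)}$ and of each $T_{(h,\beta)}$ is homotopy equivalent to a circle. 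Taking account of the ``long'' comparabilities created by transitivity (for instance $D_{(h,\beta)}<(h,\gamma)$ and $D_{(h,\beta)}<B_{(h,\gamma)}$ whenever $\gamma\ge\beta$), one then collapses all simplices of $\mathcal{K}(\overline{X}_H^*)$ of dimension at least two against free faces, working from the top dimension down: every chain of length at least three passes through a vertex of $H\times S_H$, and the closed stars of those vertices in $\mathcal{K}(\overline{X}_H^*)$ are cones. This leaves a connected graph $\Gamma$ with $|\mathcal{K}(\overline{X}_H^*)|\simeq\Gamma$.

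Since a connected graph is homotopy equivalent to a wedge of circles, one for each edge outside a spanning tree, it remains to count them. When $H$, and hence $S_H$, is finite, the number of circles equals $1-\chi(\mathcal{K}(\overline{X}_H^*))$, and I would compute this Euler characteristic by sorting the non-empty chains of $\overline{X}_H^*$ according to how they meet the backbone $(H\times S_H)\cup\{*\}$ and the gadgets $S_{(h,\beta)}\cup T_{(h,\beta)}$; the bookkeeping yields exactly $3|H||S_H|$. When $H$ is countably infinite the same analysis shows that $\Gamma$ has infinitely many independent cycles while $\overline{X}_H^*$, and hence $\Gamma$, is countable; a countable connected graph with infinitely many independent cycles is homotopy equivalent to $\bigvee_{|\mathbb{N}|}S^1$, completing the proof.

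The step I expect to be the main obstacle is the final count. The gadgets $S_{(h,\beta)}$ and $T_{(h,\beta)}$ are not attached to the backbone along a single vertex --- the transitivity relations above make them share higher simplices with the rest of the complex --- so one cannot simply add the first Betti numbers of the pieces, and it is the careful accounting of these identifications that turns the first estimate into $3|H||S_H|$. A secondary difficulty is justifying that $|\mathcal{K}(\overline{X}_H^*)|$ genuinely has the homotopy type of a graph, so that ``wedge of circles'' is legitimate rather than merely a statement about the Euler characteristic; in the countably infinite case this must be organized so that the collapses pass to the colimit, since the finite-space techniques of Section \ref{section:preliminaries} no longer apply directly.
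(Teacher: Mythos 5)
Your proposal follows essentially the same route as the paper: collapse $X_*^G$ to $*$ via the order-preserving comparison $\rho\le \mathrm{id}$ to get $X_H^G\simeq X_H^*\simeq \overline{X}_H^*$, and then invoke the computation of the weak homotopy type of $\overline{X}_H^*$ from \cite{chocano2020topological} (the paper cites Proposition 6.1 there), which already gives the wedge of $3|H||S_H|$ circles in the finite case and of $|\mathbb{N}|$ circles in the countable case. The alternative ``direct'' collapsing/Euler-characteristic argument you sketch is not needed and is not carried out in detail, but the citation route you offer first is exactly the paper's proof, so the proposal is correct.
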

\begin{proof}
We have that $X_H^G$ has the same homotopy type of $X_H^*$. Repeating the same arguments used in \cite[Proposition 6.1]{chocano2020topological} the desired result follows.
\end{proof}

\begin{rem} Let $G$ and $H$ be finite groups and let $X_H^G$ be the finite topological space obtained in the proof of Lemma \ref{thm:teoremaPrincipal}. We can remove $\{T_{(h,\beta)}| h\in H, \beta\in S_H\setminus \{-1,0\}\}$ from $X_H^G$. The resulting poset also satisfies that its group of homeomorphisms is isomorphic to $G$ and its group of homotopy classes of self-homotopy equivalences is isomorphic to $H$. This finite topological space has $|G|(|S_G|+2)+|H|(|S_H|+2)+4|S_H||H|+\frac{|H|(|H|+1)}{2}+1$ points. The first term corresponds to $X_*^G\setminus \{ *\}$, the second term corresponds to $X_H^*\setminus \{*\}$, the third term corresponds to the sets $S_{(h,\beta)}$, the fourth term corresponds to the points of the sets $W_h$ and the last term corresponds to the point $*$.
\end{rem}

We can change the sets $T_{(h,\beta)}$ from the proof of Lemma \ref{thm:teoremaPrincipal} by $T_{(h,\beta)}^n$ as in \cite[Section 5]{chocano2020topological}, where $T_{(h,\beta)}^n:=\{ x_1,x_2,...x_{n+3}, y_1,y_2,...,y_{n+3}\}$ with the following relations:
\begin{align}
(h,\beta)<x_1>y_2<x_3>y_4<...<x_{n+2}>y_{n+3}<x_{n+3}>y_{n+2}<x_{n+1}<...<x_2>y_1<(h,\beta), \\
(h,\beta)<x_1>y_2<x_3>y_4<...>y_{n+2}<x_{n+3}>y_{n+3}<x_{n+2}>y_{n+1}<...<x_2>y_1<(h,\beta).
\end{align}
We consider (1) for $n$ odd and (2) for $n$ even. We denote this poset by $X_{Hn}^{G}$. 
\begin{cor}\label{cor:infinitosEspaciosFinitos} Given two finite groups $G$ and $H$, there are infinitely many (non-homotopy-equivalent) topological spaces $\{ X_{Hn}^{G}\}_{n\in \mathbb{N}}$ such that $Aut(X_{Hn}^G)$ is isomorphic to $ G$ and $\mathcal{E}(X_{Hn}^G)$ is isomorphic to $H$ for every $n\in \mathbb{N}$.
\end{cor}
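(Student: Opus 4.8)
The goal is to show that the posets $X_{Hn}^{G}$, obtained from $X_H^G$ by replacing each gadget $T_{(h,\beta)}$ with $T_{(h,\beta)}^n$, realize $G$ as automorphism group and $H$ as the group of self-homotopy equivalences, and moreover that they are pairwise non-homotopy-equivalent for distinct $n$. The strategy is to observe that essentially nothing in the proof of Lemma \ref{thm:teoremaPrincipal} used the internal structure of $T_{(h,\beta)}$ beyond a few local properties, and that those properties persist for $T_{(h,\beta)}^n$; then the non-homotopy-equivalence follows from a homotopy invariant that distinguishes the $T^n$'s, namely the weak homotopy type (a wedge of circles whose number depends on $n$), exactly as in \cite[Section 5]{chocano2020topological}.

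First I would verify the two realization claims for fixed $n$. The point $*$ still has $P_* = (|H|,|G|)$ (or the appropriate cardinalities) and height $1$, and this was the only feature of $*$ used to force $f(*)=*$ for $f\in Aut(X_{Hn}^G)$; one must just re-check that the points of $T_{(h,\beta)}^n$ do not create new height-$1$ points that could be the image of $*$, which is immediate from relations (1)--(2) since every point of $T_{(h,\beta)}^n$ sits above some $(h,\beta)$ with $\beta$ ranging over a chain and hence has height $\geq 2$, and the maximal/minimal bookkeeping goes through verbatim. The argument that $Aut(X_{Hn}^G)\cong G$ then reduces, as before, to identifying $Aut$ of the ``$X_*^G$-side'' which is untouched, together with \cite[Remark 4.2]{chocano2020topological}-type rigidity on the $H$-side; since \cite[Section 5]{chocano2020topological} already carries out this rigidity analysis for the spaces built with $T^n$, I would invoke it. For $\mathcal{E}$, the sets $W_h$ are still removable by the retraction $r$ (it lands in $A_{(h,0)}$, untouched), so $X_{Hn}^G$ is homotopy equivalent to the poset obtained from $\overline{X}_H^*$ by the $T\rightsquigarrow T^n$ substitution, and $\mathcal{E}$ of that space is $H$ by \cite[Section 5]{chocano2020topological}.

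For the non-homotopy-equivalence, I would compute the weak homotopy type of $X_{Hn}^G$. As in Proposition \ref{prop:WeakHomotopyTypeArbitraryGroups}, $X_{Hn}^G$ has the homotopy type of its $H$-side, whose McCord complex is (up to collapse) a wedge of circles; the gadget $T_{(h,\beta)}^n$ contributes circles in a way that depends on $n$ (the ``$n$-fold zigzag'' of relations (1)--(2) raises the number of $1$-cycles linearly in $n$), so $H_1(X_{Hn}^G)$ has rank a strictly increasing function of $n$. Hence $X_{Hm}^G \not\simeq X_{Hn}^G$ for $m\neq n$, and in fact they are not even weak homotopy equivalent. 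I would therefore structure the proof as: (i) cite Lemma \ref{thm:teoremaPrincipal} and \cite[Section 5]{chocano2020topological} for the realization of $G$ and $H$; (ii) compute $H_1$ following Proposition \ref{prop:WeakHomotopyTypeArbitraryGroups} to get the invariant; (iii) conclude. The main obstacle is purely bookkeeping: making sure the substitution $T_{(h,\beta)}\rightsquigarrow T_{(h,\beta)}^n$ preserves the three delicate local properties used in the proof of Lemma \ref{thm:teoremaPrincipal} — that $A_{(h,0)}$ remains an up beat point with minimal over-set $w_h$, that no point of the new gadget is a down beat point of the form needed to be $f^{-1}(E_{(h,0)})$, and that no new height-$1$ point appears — but all three are visible directly from relations (1)--(2) and are handled in \cite[Section 5]{chocano2020topological}, so no genuinely new argument is required.
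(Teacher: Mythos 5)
The realization part of your argument (that the substitution $T_{(h,\beta)}\rightsquigarrow T^n_{(h,\beta)}$ does not disturb the rigidity analysis of the proof of Lemma \ref{thm:teoremaPrincipal}) is essentially what the paper does, and is fine. The genuine problem is your step (ii), the invariant you use to separate the spaces $X^G_{Hn}$ for different $n$. The zigzag relations (1)--(2) make $T^n_{(h,\beta)}\cup\{(h,\beta)\}$ a single closed fence through all $2(n+3)$ new points: as a poset it is just a \emph{longer finite model of one circle}, not a gadget whose number of $1$-cycles grows with $n$. Consequently the weak homotopy type of $X^G_{Hn}$ (a fixed wedge of circles, as in Proposition \ref{prop:WeakHomotopyTypeArbitraryGroups}) is the same for every $n$, so the rank of $H_1$ is constant in $n$ and cannot distinguish the spaces; in particular your claim that the $X^G_{Hn}$ ``are not even weak homotopy equivalent'' is false — they are all weak homotopy equivalent to one another, and no homology or homotopy group will tell them apart.

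The distinction has to be made at the level of homotopy type of finite spaces, which is strictly finer than weak homotopy type. This is exactly what the paper does: remove the beat points of $X^G_{Hn}$ one by one to reach a core; the points of the gadgets $T^n_{(h,\beta)}$ survive in the core (none of them is a beat point, since each has either two covers or two cocovers and none of the relevant $U_x\setminus\{x\}$ or $F_x\setminus\{x\}$ has a maximum or minimum), so the cores of $X^G_{Hm}$ and $X^G_{Hn}$ have different cardinalities for $m\neq n$ and hence are not homeomorphic. By Theorem \ref{thm:stonghomeoeshomoeq} (Stong's classification: finite $T_0$ spaces are homotopy equivalent if and only if their cores are homeomorphic), the spaces $X^G_{Hn}$ are pairwise non-homotopy-equivalent. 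Replacing your $H_1$ computation by this core-cardinality argument closes the gap; the rest of your proposal can stand as written.
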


\begin{proof}
The proof is analogous to the proof of Lemma \ref{thm:teoremaPrincipal}. By Theorem \ref{thm:stonghomeoeshomoeq}, we have that the topological spaces are not homotopy equivalent due to their different cardinality after removing all the beat points one by one.
\end{proof}

\section{Examples, remarks and proof of Theorem \ref{thm:realizacionHomo}}\label{section:pruebaSegundoTeorema}
The idea of this section is to modify the topological space obtained in Lemma \ref{thm:teoremaPrincipal} to prove Theorem \ref{thm:realizacionHomo}. Given a homomorphism of groups $f:G\rightarrow H$, we slightly modify the topological space $X_H^*$ defined in the proof of Lemma \ref{thm:teoremaPrincipal} to get $X_f$. Adding new relations to $X_H^*$ we can control the homomorphism of groups $\tau:Aut(X_f)\rightarrow \mathcal{E}(X_f)$ given by $\tau(f)=[f]$. 
\begin{ex} Let us consider the cyclic group of two elements $\mathbb{Z}_2$ and the group of integer numbers $\mathbb{Z}$. We consider the homomorphism of groups $f:\mathbb{Z}\rightarrow \mathbb{Z}_2$ given by $f(n)=n$ $mod$ $2$. We consider the topological space $X_{\mathbb{Z}_2}^\mathbb{Z}$ obtained in the proof of Lemma \ref{thm:teoremaPrincipal}. We remove $W_0$ and $W_1$ from it. The resulting poset $X_f$ corresponds to the Hasse diagram shown in black in Figure \ref{fig:realHomoEjemplo}. 
\begin{figure}[h]\center
\includegraphics[scale=0.82]{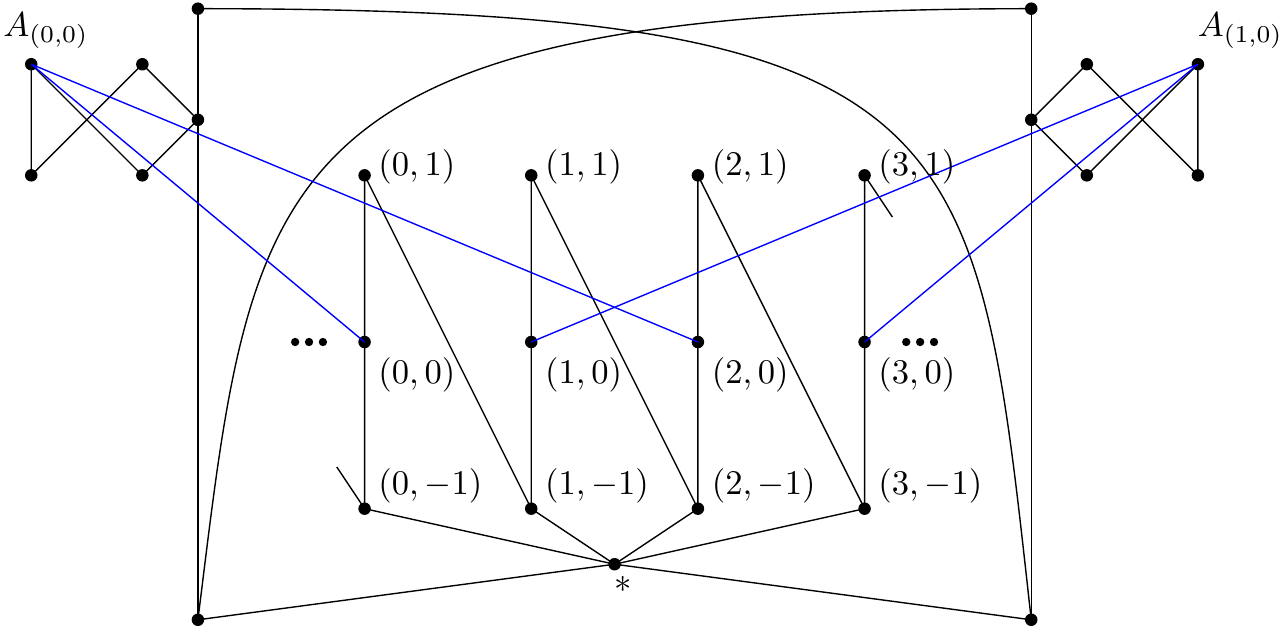}
\caption{Hasse diagram of $X_f$.}
\label{fig:realHomoEjemplo}
\end{figure}

Now we add the following relations to $X_f$: $(n,0) \prec A_{(1,0)}$ if $f(n)=1$ and $(n,0)\prec A_{(0,0)} $ if $f(n)=0$. In Figure \ref{fig:realHomoEjemplo} we have represented these relations in blue. It is easy to verify that $X_f$ satisfies that $Aut(X_f)=\mathbb{Z}$, $\mathcal{E}(X_f)= \mathbb{Z}_2$ and $f=\tau$.

\end{ex}

\begin{ex} Let $f:\mathbb{Z}_2\rightarrow  \mathbb{Z}_2\oplus \mathbb{Z}_2$ be the homomorphism of groups given by $f(0)=(0,0)$ and $f(1)=(1,0)$. In Figure \ref{fig:realHomoEjemploKlein} we have the Hasse diagram of $X_f$, where we use the same notation introduced in Example \ref{ex:primerEjemplo}. We have that $Aut(X_f)= \mathbb{Z}_2$, $\mathcal{E}(X_f)=\mathbb{Z}_2\oplus \mathbb{Z}_2$ and $\tau=f$.
\begin{figure}[h]\center
\includegraphics[scale=0.82]{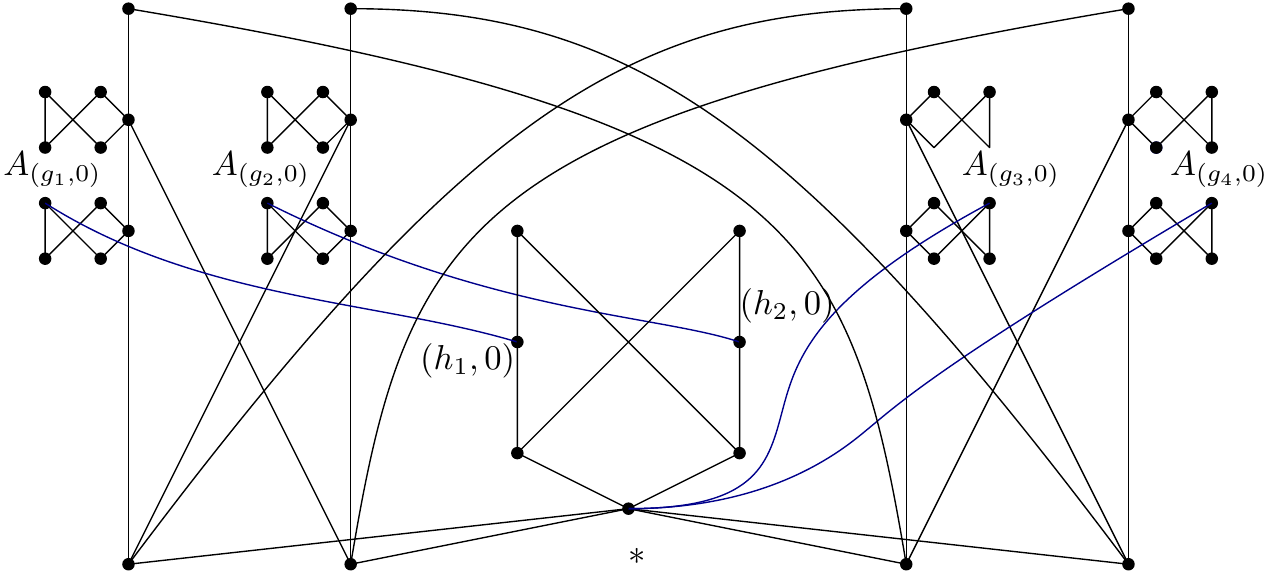}
\caption{Hasse diagram of $X_f$.}
\label{fig:realHomoEjemploKlein}
\end{figure}
\end{ex}

\begin{proof}[\textbf{Proof of Theorem \ref{thm:realizacionHomo}}]
Suppose $G$ and $H$ are not trivial groups since otherwise the result follows from Lemma \ref{thm:teoremaPrincipal}. Let $X_H^G$ be the topological space obtained in the proof of Lemma \ref{thm:teoremaPrincipal}. We consider $X_f=X_H^G\setminus \{W_h |h\in H\}$. We keep the same relations defined on $X_f$ as a subspace of $X_H^G$ and add the following relations:
\begin{itemize}
\item $A_{(h,0)}\succ (g,0)$ if $f(g)=h$, where $h\in H$ and $g\in G$.
\item $A_{(h,0)}\succ*$ if $h\notin f(G)$.
\end{itemize}
It is easy to check that $X_f$ is a partially ordered set with the above relations. The task is now to show that $\mathcal{E}(X_f)\simeq H$. We consider $r:X_f\rightarrow X_f$ given by 
\[ r(x) = \begin{cases} 
          * & x\in X_*^G \\
          x & x\in X_f  \setminus \{ X_*^G\}.
       \end{cases}
    \]
We have that $r$ preserves the order so it is a continuous map. It is simple to verify that $r(x)\leq id(x)$ for every $x\in X_f$, where $id$ denotes the identity map. From this it follows that $X_f$ is homotopy equivalent to $r(X_f)=X_H^*\subset X_f$. On the other hand, repeating the same arguments used in \cite{chocano2020topological}, it can be proved that $X_H^*$ is locally a core \cite{kukiela2010homotopy} or a minimal finite space in case $H$ is a finite group, which implies that $\mathcal{E}(X_f)\simeq \mathcal{E}(X_H^*)= Aut(X_H^*)$. Since $P_*=(|H|,|H|)$ and $ht(*)=1$, it follows that every homeomorphism $T:X_H^*\rightarrow X_H^*$ fixes $*$. Hence, the group of homeomorphisms of $X_H^*$ as a subspace of $X_f$ is isomorphic to the group of homeomorphisms of the topological space $X_H^*$ obtained in the proof of Lemma \ref{thm:teoremaPrincipal}. Thus, $\mathcal{E}(X_f)\simeq H$.

We proceed to show that $Aut(X_f)\simeq G$. We consider the following auxiliary sets: $Col_h=\{(h,\beta)|\beta\in S_H\}\cup \{ S_{(h,\beta)} \cup T_{(h,\beta)}| \beta\in S_H\setminus \{-1,max(S_H) \} \}$, where $h\in H$, and $Col^g=\{(g,\alpha)|\alpha\in S_G \}$, where $g\in G$. If $x\in X_*^G\subset X_f$, then every homeomorphism $T:X_f\rightarrow X_f$ satisfies that $T(x)\in X_*^G$. We prove the last assertion. We know that $X_f\setminus \{X_* ^G \}$ does not contain beat points. On the other hand,  for every $g\in G$ we have that $(g,-1)$ is a beat point of height $2$. Using Proposition \ref{prop:sucesores} and the notion of continuity we deduce that for every $T\in Aut(X_f)$ and $(g,\alpha)$, where $g\in G$ and $\alpha \in S_G$, we have $T(g,\alpha)=(g',\alpha)$ for some $g'\in G$. 

We consider $\varphi:G\rightarrow Aut(X_f)$ given by $\varphi(g)(g',\alpha)=(gg',\alpha)$ if $g'\in G$ and $\alpha\in S_G$, $\varphi(g)(h,\beta)=(f(g)h,\beta)$ if $h\in H$ and $\beta \in S_H$, $\varphi(g)(S_{(h,\beta )}\cup T_{(h,\beta)} )=S_{(f(g)h,\beta )}\cup T_{(f(g)h,\beta)} $ defined in the natural way if $h\in H$ and $\beta \in S_H\setminus \{-1,max(S_H) \}$ and $\varphi(g)(*)=*$. We prove that $\varphi$ is well-defined. We verify the continuity of $\varphi(g)$. Suppose $(g',0)\prec A_{(h,0)}$ for some $g'\in G$ and $h\in H$. By hypothesis, $f(g')=h$. Therefore,
 $$\varphi(g)(g',0)=(gg',0)\prec A_{(f(gg'),0)} = A_{(f(g)f(g'),0)}=A_{(f(g)h,0)}=\varphi(g) A_{(h,0)}.$$
It is easy to check that $\varphi(g)$ preserves the remaining relations. The inverse of $\varphi(g)$ is given by $\varphi(g^{-1})$. Hence, $\varphi$ is well-defined. By construction, $\varphi$ is a monomorphism of groups. Suppose $T\in Aut(X_f)$. Proposition \ref{prop:sucesores}, Remark \ref{rem:coinciden} and the fact that $T_{|X_*^G}\in Aut({X_*^G})$ imply that every $T\in Aut(X_f)$ satisfies $T(Col_h)=Col_{h'}$ and $T(Col^g)=Col_{g'}$ for some $g'\in G$ and $h'\in H$, where $g\in G$ and $h\in H$. We consider $(g,0)\prec A_{(h,0)}$ for some $g\in G$ and $h\in H$. We get $T(Col^g)=Col^{g'}$ for some $g'\in G$ and $T(Col_h)=Col_{h'}$ for some $h'\in H$. By Remark \ref{rem:coinciden}, the proof of Lemma \ref{thm:teoremaPrincipal} and the fact that $T_{|X_G^*}\in Aut(X_*^G)$, there exists $t\in G$ such that  $T(Col^s)=Col^{ts}$, where $s\in G$. Hence, $g'=tg$. By Proposition \ref{prop:sucesores}, $T(A_{(h,0)})=A_{(h',0)}\succ (tg,0)=T(g,0)$, we have $h'=f(tg)=f(t)f(g)=f(t)h$. Thus, $T=\varphi(t) $ because of Remark \ref{rem:coinciden} and the fact that $T_{|X_f\setminus \{X_*^G \} }\in Aut(X_H^*\setminus \{ *\})$. By construction, for every $g\in G$ the equality $\tau(g)=f(g)$ holds. Since every $T\in Aut(X_f)$ can be seen as $T=\varphi(g)$ for some $g$, it follows that $\tau(T)=f(g)$, where $f(g)=\varphi(g)_{|X_H^*}\in \mathcal{E}(X_f)$.
\end{proof}
\begin{rem} Theorem \ref{thm:realizacionHomo} generalizes Lemma \ref{thm:teoremaPrincipal} and the results of realization obtained in \cite{chocano2020topological}. Let $G$ and $H$ be two groups. Using Theorem \ref{thm:realizacionHomo}, we obtain a family of topological spaces $\{X_f\}_{f:G\rightarrow H}$ satisfying that $Aut(X_f)\simeq G$ and $\mathcal{E}(X_f)\simeq H$.
\end{rem}
\begin{prop} Let $G$ and $H$ be groups. If $g,f:G\rightarrow H$ are homomorphisms of groups, then $X_f$ is homotopy equivalent to $X_g$.
\end{prop}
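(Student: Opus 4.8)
The plan is to prove something slightly stronger than asked: that every $X_f$ is a strong deformation retract of one and the same Alexandroff space, which clearly does not depend on $f$. Concretely, I claim each $X_f$ deformation retracts onto the poset whose underlying set is $\overline{X}_H^{*}=X_H^{*}\setminus\{W_h\mid h\in H\}$ and whose order is the order of $X_H^{*}$ together with the relations $*<A_{(h,0)}$ for all $h\in H$; call this poset $Y$. Since $Y$ is manifestly independent of $f$, we get $X_f\simeq Y\simeq X_g$ for any two homomorphisms $f,g\colon G\to H$. (If $G$ or $H$ is trivial there is only one homomorphism and nothing to prove, so I would assume both non-trivial.)

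The argument has two steps. First, reuse the retraction from the proof of Theorem \ref{thm:realizacionHomo}: the map $r\colon X_f\to X_f$ that sends every point of $X_*^{G}$ to $*$ and fixes every other point. One checks $r$ is order-preserving exactly as there — the only instances needing attention are relations $x<y$ with $x\in X_*^{G}\setminus\{*\}$ and $y\notin X_*^{G}$, which by the construction of $X_f$ force $y$ to be one of the maximal points $A_{(h,0)}$, and then $*\le A_{(h,0)}$ holds in $X_f$; and $r\le\mathrm{id}_{X_f}$ holds because $*$ is the minimum of $X_*^{G}$. By the proposition asserting that comparable continuous maps between Alexandroff spaces are homotopic, $r\simeq\mathrm{id}_{X_f}$, and since $r$ fixes its image $Y_f:=r(X_f)$ pointwise, $Y_f$ is a deformation retract of $X_f$, so $X_f\simeq Y_f$. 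Second, identify $Y_f$: its underlying set is $X_f\setminus(X_*^{G}\setminus\{*\})=\overline{X}_H^{*}$, already independent of $f$; and its order is the order of $X_f$ restricted to this set. The relations inherited from $X_H^{*}$ are $f$-independent, and the only relations added in the construction of $X_f$ that survive the restriction (i.e. relate two points of $\overline{X}_H^{*}$) are of the form $*\le A_{(h,0)}$, because $*$ is the minimum of $X_*^{G}$ and each $A_{(h,0)}$ is maximal in $X_f$. For each fixed $h\in H$ this relation holds either directly (the clause $A_{(h,0)}\succ *$ when $h\notin f(G)$) or via $*\le (g,0)\prec A_{(h,0)}$ with $f(g)=h$ (when $h\in f(G)$). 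Hence $Y_f=Y$ for every $f$, and we conclude $X_f\simeq Y\simeq X_g$.

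The one delicate point, and the step I would write out carefully, is precisely this last case analysis: after collapsing $X_*^{G}$ to $*$ one must be sure that $*$ ends up below $A_{(h,0)}$ for \emph{every} $h\in H$, including those $h$ outside the image of $f$ — which is exactly why the clause $A_{(h,0)}\succ *$ for $h\notin f(G)$ was built into $X_f$ in the first place. Everything else (continuity of $r$, the comparability-implies-homotopic step, maximality of the $A_{(h,0)}$ in $X_f$) is routine bookkeeping with the relations listed in the proofs of Lemma \ref{thm:teoremaPrincipal} and Theorem \ref{thm:realizacionHomo}. The moral is that the retract $Y_f$ simply cannot "see" $f$, so the homotopy type of $X_f$ is constant in $f$.
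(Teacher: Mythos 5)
Your proof is correct and follows essentially the same route as the paper: the paper also deduces the statement from the fact that the retraction $r$ (from the proof of Theorem \ref{thm:realizacionHomo}) collapses $X_*^G$ to $*$, so that $X_f$ deformation retracts onto a subspace independent of $f$. Your only addition is to identify the induced order on the retract explicitly (the relations $*\le A_{(h,0)}$ for \emph{all} $h\in H$, whether or not $h\in f(G)$), a point the paper leaves implicit but which does not change the argument.
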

\begin{proof}
The result is an immediate consequence of the construction. We have that $X_f$ is homotopy equivalent to $X_H^*$ for every homomorphism of groups $f:G\rightarrow H$. Therefore, the homotopy type of the topological space obtained in the proof of Theorem \ref{thm:realizacionHomo} does not depend on the homomorphism chosen to construct it. Thus we deduce the desired result.
\end{proof}
\begin{prop} Let $G$ and $H$ be groups and let $f,g:G\rightarrow H$ be homomorphisms of groups. Then $f=g$ if and only if $X_f$ is homeomorphic to $X_g$.
\end{prop}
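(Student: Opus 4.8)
The implication ``$f=g\Rightarrow X_f$ is homeomorphic to $X_g$'' is trivial, the two posets being literally equal; and if $G$ or $H$ is trivial there is a unique homomorphism $G\to H$, so both implications hold vacuously. We may therefore assume $G$ and $H$ are non-trivial and that $X_f,X_g$ are built exactly as in the proof of Theorem \ref{thm:realizacionHomo}. The content is then the implication: if $\Psi\colon X_f\to X_g$ is a homeomorphism, then $f=g$. The plan is to show first that $\Psi$ is forced to be ``coordinate‑preserving'' in the same sense as the self‑homeomorphisms analysed in the proof of Theorem \ref{thm:realizacionHomo}, and then to read off $f=g$ from the bridge relations alone.

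\textbf{Step 1 (the main point).} I would prove that $\Psi(*)=*$, that $\Psi$ carries $X_*^G$ onto $X_*^G$ and $X_H^*$ onto $X_H^*$, and that there exist $t\in G$ and $s\in H$ with $\Psi(a,\alpha)=(ta,\alpha)$ for $a\in G$, $\alpha\in S_G$, and $\Psi(h,\beta)=(sh,\beta)$, $\Psi(A_{(h,\beta)})=A_{(sh,\beta)}$ (and correspondingly on each $S_{(h,\beta)}\cup T_{(h,\beta)}$) for $h\in H$, $\beta\in S_H$. The key observation is that $X_f$ and $X_g$ have the same underlying set and the same order relations except for the bridge relations $(a,0)\prec A_{(h,0)}$ and $*\prec A_{(h,0)}$, and that every invariant used in the proof of Theorem \ref{thm:realizacionHomo} --- heights, the pairs $P_x$, being a (weak) beat point --- is preserved by any homeomorphism (Proposition \ref{prop:sucesores}, Lemma \ref{lem:beatPoints}) and is computed on the parts of the space where the two constructions coincide: the beat points $(a,-1)$ of height $2$, the local order around $*$, the absence of beat points inside $X_H^*$, and so on. Hence that argument applies verbatim, and its uniqueness ingredient (Remark \ref{rem:coinciden}, together with the isomorphisms $\varphi$ and $\phi$ from the proofs of Lemma \ref{thm:teoremaPrincipal} and Theorem \ref{thm:realizacionHomo}, plus Proposition \ref{prop:sucesores} to fix the position of $A$ inside $S_{(h,\beta)}\cup T_{(h,\beta)}$) pins $\Psi$ down in the stated form. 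I expect this transfer to be the only delicate point, precisely because $\Psi$ is a map between two genuinely different spaces rather than a self‑homeomorphism; the work consists in checking that each local invariant it must respect is insensitive to the choice of bridges, after which nothing new happens.

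\textbf{Step 2 (reading off $f=g$).} Fix $a\in G$. By construction $(a,0)\prec A_{(f(a),0)}$ in $X_f$, and $A_{(f(a),0)}$ is the only cover of $(a,0)$ lying in $X_H^*$, since all relations joining $(a,0)$ to points outside $X_*^G$ come from the bridge. Applying $\Psi$ and using Step 1 gives $(ta,0)\prec A_{(sf(a),0)}$ in $X_g$, with $A_{(sf(a),0)}$ again the only cover of $(ta,0)$ lying in $X_H^*$; but in $X_g$ this cover is, by definition of $X_g$, the element $A_{(g(ta),0)}$. Hence $g(ta)=s\,f(a)$, i.e. $g(t)\,g(a)=s\,f(a)$ for every $a\in G$. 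Taking $a$ equal to the identity of $G$ gives $s=g(t)$, and substituting back yields $g(a)=f(a)$ for all $a\in G$; therefore $f=g$. (The remaining bridge relations $*\prec A_{(h,0)}$ with $h\notin f(G)$ are then automatically consistent and play no role in the conclusion.)
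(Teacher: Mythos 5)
Your proposal is correct and follows essentially the same route as the paper: you force the homeomorphism to restrict to automorphisms of $X_*^G$ and $X_H^*$ given by left multiplication by some $t\in G$ and $s\in H$, then read $g(ta)=s\,f(a)$ off the bridge relations $(a,0)\prec A_{(f(a),0)}$ and evaluate at the identity to get $s=g(t)$ and hence $f=g$. This is exactly the paper's argument, with your $t,s$ playing the roles of its $T,\overline{T}$.
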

\begin{proof}
One of the implications is trivial. It suffices to show that if $X_f$ is homeomorphic to $X_g$, then $f=g$. Since $X_f$ is homeomorphic to $X_g$, it follows that there exists a homeomorphism $T':X_f\rightarrow X_g$. From the construction of $X_f$ and $X_g$ in the proof of Theorem \ref{thm:realizacionHomo} it can be easily deduced that $T'_{|X^G_*}\in Aut(X^G_*)\simeq G$ and $T'_{|X_H^*}\in Aut(X_H^*)\simeq H$. This is due to the fact that $X^G_*$ contains beat points while $X_H^*$ does not have beat points. Therefore, $T'_{|X^G_*}$ can be related to the action of an element $T\in G$ and $T'_{|X_H^*}$ can be related to the action of an element $\overline{T}\in H$. We have $(e,1)\prec A_{(f(e),0)}$, where $e$ denotes the identity element in $G$, and we also have 
$$T'(e,1)=(T,1)\prec A_{(\overline{T}f(e),0)}=T'( A_{(f(e),0)}), $$
which implies that $g(T)=\overline{T}f(e)$. Thus, $g(T)=\overline{T}$ because $f$ is a homomorphism of groups. In addition, for every $h\in G$,  we know that there exists a relation in $X_f$ of the following form $(h,1)\prec A_{(f(h),0)}$. We have
$$T'(h,1)=(Th,1)\prec A_{(\overline{T}f(h),0)}=T'(A_{(f(h),0)}).$$
By the construction of $X_g$ we get $g(Th)=g(T)g(h)=\overline{T}f(h)$. Earlier we prove that $g(T)=\overline{T}$, which implies that $g(h)=f(h)$ for every $h\in G$.
\end{proof}
\section{Groups of homology, homotopy and automorphisms}\label{section:homologyHomotopy}

We first prove that the groups studied previously do not determine neither the homotopy type nor the topological type of a topological space $X$ in general. To do this we provide an example. However, if the topological space $X$ satisfies some properties, namely, $X$ is compact and a locally Euclidean manifold with or without boundary, then its group of homeomorphisms determines its topological type of it, see \cite{whittaker1963onIsomorphic} for more details.
\begin{ex}\label{ex:ejemploTrivial} Let us consider the Alexandroff space $W_2$ given by the Hasse diagram of Figure \ref{fig:ejemploTrivial}. It is the union of $L_1=\{x_i \}_{i=1,...,9}$ and $L_2=\{x_j\}_{j=9,...17}$, where we are identifying the point $x_9$ of $L_i$ for $i=1,2$. For simplicity, $W_1$ denotes $L_1$. The topological space $L_1$ was introduced in  \cite[Figure 2]{rival1976afixed} and has the weak homotopy type of a point. It is proved in \cite{cianci2020smallest} that $W_1$ is the smallest finite topological space having the same weak homotopy type of a point but not contractible. It is clear that $W_2$ does not have beat points, so $Aut(W_2)$ is isomorphic to $\mathcal{E}(W_2)$. On the other hand, it is easy to show that $Aut(W_2)$ is the trivial group. Since $P_x=(3,5)$ only for $x\in \{x_5,x_{13}\}$ and the heights of these points are different, it follows that every homeomorphism must fix $x_5$ and $x_{13}$. Proposition \ref{prop:sucesores} leads to the desired result. Furthermore, the homotopy and singular homology groups of $W_2$ are trivial. We can study the weak homotopy type of $W_2$ studying the McCord complex $\mathcal{K}(W_2) $ or removing beat and weak beat points. We have that $x_{16}$ is a weak beat point. After removing this point, $x_{12}$ and $x_{14}$ are up beat points. If we remove them, then it is easy to check that the remaining space is homotopy equivalent to the space given by the points $\{x_i\}_{i=1,...,9}$. We continue in this fashion. We have $x_{8}$ is a weak beat point. After removing this point, $x_{7}$ and $x_{9}$ are down beat points. Thus, $X$ has the same weak homotopy type of a point. Therefore we have that $W_2$ is a topological space satisfying that $Aut(W_2)\simeq \mathcal{E}(W_2)\simeq \pi_n(W_2)\simeq H_n(W_2)\simeq 0$ for every $n>0$ but it is not homeomorphic nor homotopy equivalent to a point. We can generalize this topological space by taking more copies of the topological space introduced in \cite{rival1976afixed}. For instance, we can define $W_3$ just as $L_3 \cup W_2$, where $L_3=\{x_i\}_{i=17,...,25}$ and we are identifying the point $x_{17}$ of $W_3$ and $W_2$. It is easy to prove that $W_n$ has the weak homotopy type of a point for every $n\in \mathbb{N}$ because $x_i\in W_n$ with $i\equiv 0\ (\textrm{mod}\ 8)$ is a weak beat point.
\begin{figure}[h]\center
\includegraphics[scale=1.3]{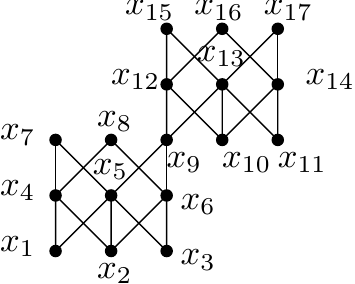}
\caption{Hasse diagram of $W_2$.}
\label{fig:ejemploTrivial}
\end{figure}
\end{ex}
One possible consequence of the previous construction is the following result.
\begin{prop}\label{prop:circfuera} Let $G$ and $H$ be finite groups. There exists a topological space $X$ such that $Aut(X)$ is isomorphic to $G$, $\mathcal{E}(X)$ is isomorphic to $H$ and $X$ is weak homotopy equivalent to a point.
\begin{proof}
We consider the topological space $X_H^G$ obtained in the proof of Lemma \ref{thm:teoremaPrincipal} and the finite topological space $W_2$ given in Example $\ref{ex:ejemploTrivial}$. Let $X$ denote $X_H^G\circledast W_2$. By Proposition \ref{non-hausdorffsuspensionAut}, Example \ref{ex:ejemploTrivial} and the proof of Lemma \ref{thm:teoremaPrincipal} we have that $Aut(X)\simeq Aut(X_H^G)\times Aut(W_2)\simeq Aut(X_H^G)\simeq G$. We get that $X$ is homotopy equivalent to $X_H^*\circledast W_2$ by removing its beat points one by one. Since $X_H^*\circledast W_2$ does not contain beat points by Corollary \ref{cor:autesigualaE}, it follows that $\mathcal{E}(X_H^*\circledast W_2)\simeq Aut(X_H^*\circledast W_2)\simeq Aut(X_H^*)\simeq H$. In addition, since $W_2$ is collapsible, it follows that $X$ has the weak homotopy type of a point by Remark \ref{rem:nonhausdorffsuspensioncontractible}
\end{proof}
\end{prop}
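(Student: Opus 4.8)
The plan is to start from the space $X_H^G$ of Lemma \ref{thm:teoremaPrincipal}, which already has $Aut \cong G$ and $\mathcal{E} \cong H$ but only the weak homotopy type of a wedge of circles (Proposition \ref{prop:WeakHomotopyTypeArbitraryGroups}), and to destroy that weak homotopy type by forming a non-Hausdorff join with a space that is weakly contractible yet invisible to both invariants. The space $W_2$ of Example \ref{ex:ejemploTrivial} is exactly of this kind: it is collapsible (hence weakly contractible), it is a minimal finite space, and $Aut(W_2) \cong \mathcal{E}(W_2)$ is trivial. So I would set $X := X_H^G \circledast W_2$ and verify the three assertions one at a time.

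The automorphism group is immediate: Proposition \ref{non-hausdorffsuspensionAut} gives $Aut(X) = Aut(X_H^G) \times Aut(W_2)$, and since the second factor is trivial and $Aut(X_H^G) \cong G$, we obtain $Aut(X) \cong G$. For the weak homotopy type, I would use that $W_2$ is collapsible, so that Remark \ref{rem:nonhausdorffsuspensioncontractible} (via $\mathcal{K}(X_H^G \circledast W_2) = \mathcal{K}(X_H^G) \ast \mathcal{K}(W_2)$) shows $\mathcal{K}(X)$ is collapsible; hence $|\mathcal{K}(X)|$ is contractible, and McCord's weak equivalence $|\mathcal{K}(X)| \to X$ shows that $X$ is weak homotopy equivalent to a point.

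The group $\mathcal{E}(X)$ requires two steps. First, $X_H^G$ strong deformation retracts, by removing beat points one at a time, onto the minimal finite space $\overline{X}_H^*$ produced in the proof of Lemma \ref{thm:teoremaPrincipal} (the one with $Aut(\overline{X}_H^*) = \mathcal{E}(\overline{X}_H^*) \cong H$). These deletions are still deletions of beat points after forming the join with $W_2$: attaching $W_2$ below everything in $X_H^G$ enlarges only the minimal open sets of points of $X_H^G$, and only by elements of $W_2$, which lie below every element of $X_H^G$; hence the maximum witnessing a down beat point of $X_H^G$ still witnesses it in the join, and up beat points are unaffected. Thus $X \simeq \overline{X}_H^* \circledast W_2$ and $\mathcal{E}(X) \cong \mathcal{E}(\overline{X}_H^* \circledast W_2)$. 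Second, I would check that $\overline{X}_H^* \circledast W_2$ is itself a minimal finite space; then by Corollary \ref{cor:autesigualaE} and Remark \ref{rem:aut=e}, together with Proposition \ref{non-hausdorffsuspensionAut}, we get $\mathcal{E}(\overline{X}_H^* \circledast W_2) = Aut(\overline{X}_H^* \circledast W_2) = Aut(\overline{X}_H^*) \times Aut(W_2) \cong H$.

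The step I expect to be the main obstacle is precisely the verification that $\overline{X}_H^* \circledast W_2$ (and, along the way, the intermediate posets $X_H^G \circledast W_2$) has no beat points. Since the non-Hausdorff join places $W_2$ strictly below all of $\overline{X}_H^*$, one must rule out two new phenomena: that a minimal element of $\overline{X}_H^*$ becomes a down beat point --- impossible because $W_2$ has several maximal elements and hence no maximum --- and that a maximal element of $W_2$ becomes an up beat point --- impossible because $\overline{X}_H^*$ has several minimal elements (for instance $*$ and the vertices $D_{(h,\beta)}$) and hence no minimum. Both are short checks on the explicit Hasse diagrams; with them in hand, everything else is a direct application of the machinery recalled in Section \ref{section:preliminaries}.
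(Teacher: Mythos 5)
Your proposal is correct and follows essentially the same route as the paper: the same space $X_H^G \circledast W_2$, Proposition \ref{non-hausdorffsuspensionAut} for $Aut$, beat-point removal plus Corollary \ref{cor:autesigualaE} for $\mathcal{E}$, and collapsibility of $W_2$ together with Remark \ref{rem:nonhausdorffsuspensioncontractible} for the weak contractibility; indeed you are slightly more careful than the paper in retracting onto $\overline{X}_H^*$ (the paper writes $X_H^*$) and in checking that the join creates no new beat points. One cosmetic slip: $*$ is not a minimal element of $\overline{X}_H^*$ (it lies above the points $(h,-1)$), but your conclusion that $\overline{X}_H^*$ has no minimum still holds, since it has many minimal points such as the $(h,-1)$ and the $D_{(h,\beta)}$.
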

We motivate the proof of Theorem \ref{thm:homologygroupsAutE} with one example.
\begin{ex}\label{ex:completo} Let us consider $G=\mathbb{Z}_3$ and $H=\mathbb{Z}_2$. We consider the minimal finite model of the 2-dimensional sphere $X$, that is, $X=\{A,B,C,D,E,F\}$, where $A,B>C,D,E,F$ and $C,D>E,F$. Then $|\mathcal{K}(X)|$ is homeomorphic to $S^2$. 

We want to find a finite $T_0$ topological space $\overline{X}_{H}^{G}$ such that $H_n(\overline{X}_{H}^{G})$ and $\pi_n(\overline{X}_{H}^{G})$ are isomorphic to $ H_n(S^2)$ and $\pi_n(S^2)$ respectively for every non-negative integer $n$, $Aut(\overline{X}_{H}^{G})$ is isomorphic to $\mathbb{Z}_3$ and $\mathcal{E}(\overline{X}_{H}^{G})$ is isomorphic to $\mathbb{Z}_2$. The idea is to modify $X$ to obtain a new space satisfying that its group of homeomorphisms is trivial. We enumerate the points of $X$. For each $i\in X$ with $i=1,...,|X|$, we add $W_i$ to $X$ as in Example \ref{ex:ejemploTrivial}. The Hasse diagram of the new topological space, denoted by $X'$, can be seen in Figure \ref{fig:S2finito}. The Hasse diagram of $X$ is painted black whereas the new part is blue and purple. In purple we have the weak beat points that are not beat points. It is clear that $X'$ does not have beat points so $Aut(X')$ is isomorphic to $ \mathcal{E}(X')$. A homeomorphism $f$ sends weak beat points to weak beat points by Lemma \ref{lem:beatPoints}. It is easy to deduce from this that $Aut(X')$ is the trivial group. On the other hand, the new structure added can be removed without changing the weak homotopy type of the space, see Example \ref{ex:ejemploTrivial}. Therefore, we have that $H_n(X')$ is isomorphic to $H_n(X)$ and $\pi_n(X')$ is isomorphic to $\pi_n(X)$ for every non-negative integer $n$. 

Finally, we add a new point $t$ that connects $X'$ to $X_{H}^{G}\circledast W_2$, where $X_{H}^{G}$ is the space obtained the proof of Lemma \ref{thm:teoremaPrincipal} for finite groups. In Figure \ref{fig:S2finito} we have the Hasse diagram of the new topological space $\overline{X}_{G}^{H}$. The relations with the point $t$ are shown in green. The Hasse diagram of $X_{H}^{G}\circledast W_2$ is painted read and orange. It is easy to check that $\overline{X}_{H}^{G}$ satisfies the desired properties.

\begin{figure}[h]\center
\includegraphics[scale=0.74]{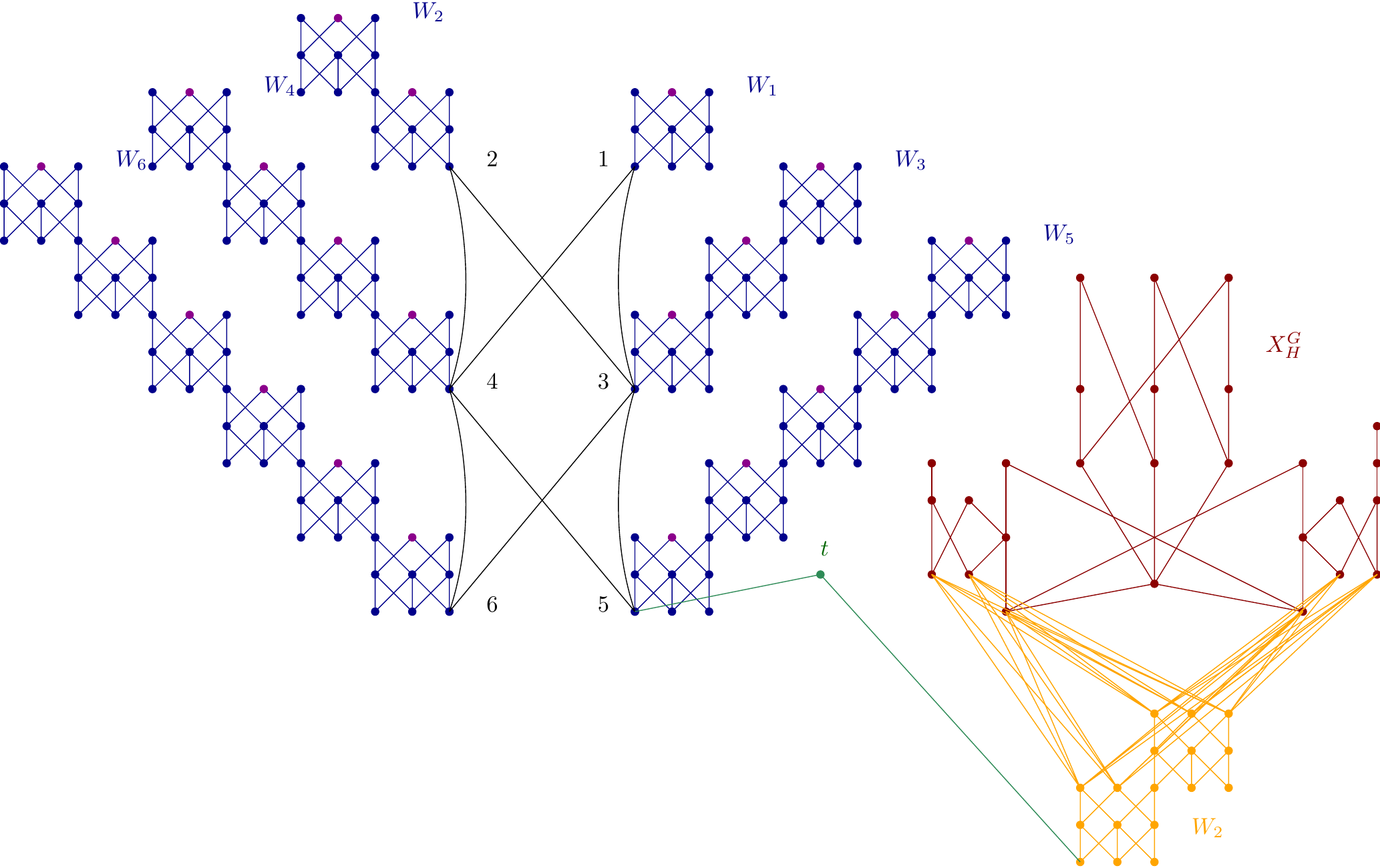}
\caption{Hasse diagram of $\overline{X}_{H}^{G}$.}\label{fig:S2finito}
\end{figure}

%

\end{ex}
%
%

\begin{proof}[\textbf{Proof of Theorem \ref{thm:homologygroupsAutE}}]
If $X$ has the same homotopy type of a point, then the result can be deduced from Proposition \ref{prop:circfuera}. Therefore we can assume that $X$ does not have the same homotopy type of a point. The idea of the proof is to follow techniques similar to the ones used in the proof of Lemma \ref{thm:teoremaPrincipal}. From the simplicial approximation to CW-complexes, \cite[Theorem 2C.5.]{hatcher2000algebraic} we get that there exists a finite simplicial complex that is homotopy equivalent to $X$. By abuse of notation we continue to write $X$ for the finite simplicial complex. We apply the McCord functor $\mathcal{X}$ to $X$ in order to obtain a finite $T_0$ topological space $\mathcal{X}(X)$ such that $\mathcal{X}(X)$ is weak homotopy equivalent to $X$. We can suppose that $\mathcal{X}(X)$ does not have beat points or weak beat points; otherwise we can remove them one by one until there are none left. We denote by $n=|\mathcal{X}(X)|$ and label the points in $\mathcal{X}(X)$, that is, $\mathcal{X}(X)=\{y_i \}_{i=1...n}$. For each $y_i\in \mathcal{X}(X)$ we consider $W_i$, where $W_i$ is the topological space obtained in Example \ref{ex:ejemploTrivial}. We consider $Z=\mathcal{X}(X) \cup \bigcup_{i=1,...,n} W_i $, where we are identifying the point $y_i$ with $x_1\in W_i$ for every $i=1,...,n$. We define the partial order on $Z$ extending the already existing partial orders. To do that we use transitivity, i.e., for $x,y\in Z$, $x\geq y$ if and only if one of the following situations is satisfied:
\begin{itemize}
\item $x,y\in \mathcal{X}(X)$ and $x$ is greater than $y$ with the partial order defined on $\mathcal{X}(X)$.
\item $x,y\in W_i$ for some $i$ and $x$ is greater than $y$ with the partial order defined on $W_i$.
\item $y\in \mathcal{X}(X)$, $x\in W_i$ for some $i$ and $x\geq y_i(=x_1) \geq  y$.
\end{itemize}

Consider $\overline{X}_H^G= Z\cup X_H^G\circledast W_2\cup \{t \}$, where $X_H^G$ is the space obtained in the proof of Lemma \ref{thm:teoremaPrincipal} and $W_2$ the space given in Example \ref{ex:ejemploTrivial}. We extend the partial order defined on $Z$ and $X_H^G\circledast W_2$ to $\overline{X}_H^G$ by declaring that $y_j<t>x_1$ for some $h\in H$, where $y_j$ is a minimal point in $\mathcal{X}(X)$ and $x_1\in W_2$. We prove that $f\in Aut(\overline{X}_H^G)$ restricted to $Z$ is the identity. In $W_i$ there are $i$ weak beat points that we will denote by $z_j^i$ with $j=1,...,i$. In fact, we have that the only weak beat points that are not beat points or do not have a bigger beat point are in $Z$. Hence, if $z_j^i\in W_i$ is a weak beat point, we have $f(z_j^i)=z_l^k\in W_k$ for some $l\leq k\leq n $. By Proposition \ref{prop:sucesores} and Lemma \ref{lem:beatPoints}, we have that $f(W_i)=W_k$. But $W_i$ is homeomorphic to $W_k$ if and only if $i=k$. By the continuity of $f$, $f(W_i)=id(W_i)$, so $f_{|Z}=id(Z)$, as we wanted. It is easy to check that $t$ is also a fixed point for every homeomorphism since $y_j\prec t$ and $f(y_j)=y_j$. We get $Aut(\overline{X}_H^G)\simeq Aut(X_H^G\circledast W_2)\simeq Aut(X_H^G)\times Aut(W_2)$. By Proposition \ref{non-hausdorffsuspensionAut}, Example \ref{ex:ejemploTrivial} and the Proof of Lemma \ref{thm:teoremaPrincipal}, $Aut(\overline{X}_H^G)\simeq Aut(X_H^G)\simeq Aut(X_*^G)\simeq G$. On the other hand, $\mathcal{E}(\overline{X}_H^G)\simeq \mathcal{E}(Z \cup \{t \}\cup X_H^*\circledast W_2)$, but $Z \cup \{t \}\cup X_H^*\circledast W_2$ does not contain beat points. Therefore, by Corollary \ref{cor:autesigualaE},  $\mathcal{E}(Z \cup \{t \}\cup X_H^*\circledast W_2)\simeq Aut(Z \cup \{t \}\cup X_H^*\circledast W_2)$. From here, repeating similar arguments than the ones used before, it can be deduced that $Aut(Z \cup \{t \}\cup X_H^*\circledast W_2) \simeq Aut(X_H^*)\simeq H$. 

Finally, $|\mathcal{K}(\overline{X}_H^G)|$ is clearly the wedge sum of $|\mathcal{K}(Z)|$ and $|\mathcal{K}(X_H^G\circledast W_2)|$. From Remark \ref{rem:nonhausdorffsuspensioncontractible} we obtain that $\mathcal{K}(X_H^G\circledast W_2)$ is homotopy equivalent to a point since $W_2$ is collapsible, which implies that $\mathcal{K}(W_2) $ is also collapsible, and $X_H^G$ is homotopy equivalent to $X_H^*$. We also get that $|\mathcal{K}(Z)|$ is homotopy equivalent to $X$ because every $W_i$ can be removed following the steps of Example \ref{ex:ejemploTrivial} without changing the weak homotopy type of $Z$. Therefore, for every $n\in \mathbb{N}$, we have $\pi_n(X)\simeq \pi_n(Z)$ and $H_n(X)\simeq H_n(Z)$.
\end{proof}


\begin{proof}[\textbf{Proof of Corollary \ref{cor:sequenceOfGroups}}]
It is an immediate consequence of Theorem \ref{thm:homologygroupsAutE}. We only need to consider the wedge sum of Moore spaces and then apply Theorem \ref{thm:homologygroupsAutE}.
\end{proof}

\begin{proof}[\textbf{ Proof of Corollary \ref{cor:homotopyGroupsandAuotomorphis}}]
We only need to use the beginning of the construction of Eilenberg-Maclane spaces to obtain a compact CW-complex $X$ with $\pi_n(X)\simeq H$ and possibly non-trivial higher homotopy groups. Therefore, the result is an immediate consequence of Theorem \ref{thm:homologygroupsAutE}.
\end{proof}

\begin{rem} The results obtained in this section are stated in terms of finite groups but it may be possible to get the same results for general groups. The idea could be to use the same constructions described in this section and the theory of \cite{kukiela2010homotopy}, which is a generalization of the theory of R.E. Stong \cite{stong1966finite}.
\end{rem}

For a compact $CW$-complex $X$, $\mathcal{E}_*(X)$ and $\mathcal{E_\#}(X)$ are nilpotent groups, see for instance \cite[Section 4]{costoya2020primer}. $\mathcal{E}_\#(X)$ ($\mathcal{E}_*(X)$) can be seen as the kernel of a homomorphism of groups. We consider the functor $\pi$ $(H_*)$ between $HPol$ and the category of groups given by $\pi(X)=\bigoplus_{i=1}^{dim(X)}\pi_i(X)$ ($H_*(X)=\bigoplus_{i=1}^{\infty} H_i(X)$). It is easy to check that $\pi$ $(H_*)$ induces a homomorphism of groups, $\overline{\pi}:\mathcal{E}(X)\rightarrow Aut(\pi(X))$ $(\overline{H}_*:\mathcal{E}(X)\rightarrow Aut(H_*(X)))$. This sends each self-homotopy equivalence to its induced morphism in the homotopy groups (homology groups), where $Aut(\cdot)$ denotes here the group of automorphisms of a group in the category of groups. Then, $\mathcal{E}_\#(X)$ ($\mathcal{E}_*(X)$) can be seen as the kernel of $\overline{\pi}$ $(\overline{H}_*)$ and so it is a normal subgroup of $\mathcal{E}(X)$. With the following example we prove that for a general topological space we cannot expect the same result.

\begin{ex}\label{ex:nonilpotente} Applying the construction obtained in the proof of Theorem \ref{thm:homologygroupsAutE} we can get a topological space $X$ such that $Aut(X)$ is trivial, $\mathcal{E}(X)= S_3$ and $X$ is weak homotopy equivalent to a circle, where $S_3$ denotes the symmetric group on a set of $3$ elements. In Figure \ref{fig:nonilpotente} we present the Hasse diagram of $X$. By construction, every self-homotopy equivalence of $X$ fixes the blue, black, green and orange parts of the Hasse diagram. On the other hand, the only part that contributes to the homotopy groups or homology groups is the black one. Therefore we can deduce that $\mathcal{E}_\#(X)=\mathcal{E}_*(X)=\mathcal{E}(X)=S_3$, which implies that $\mathcal{E}_\#(X)$ and $\mathcal{E}_*(X)$ are not nilpotent groups.

\begin{figure}[h]
\centering
\includegraphics[scale=0.8]{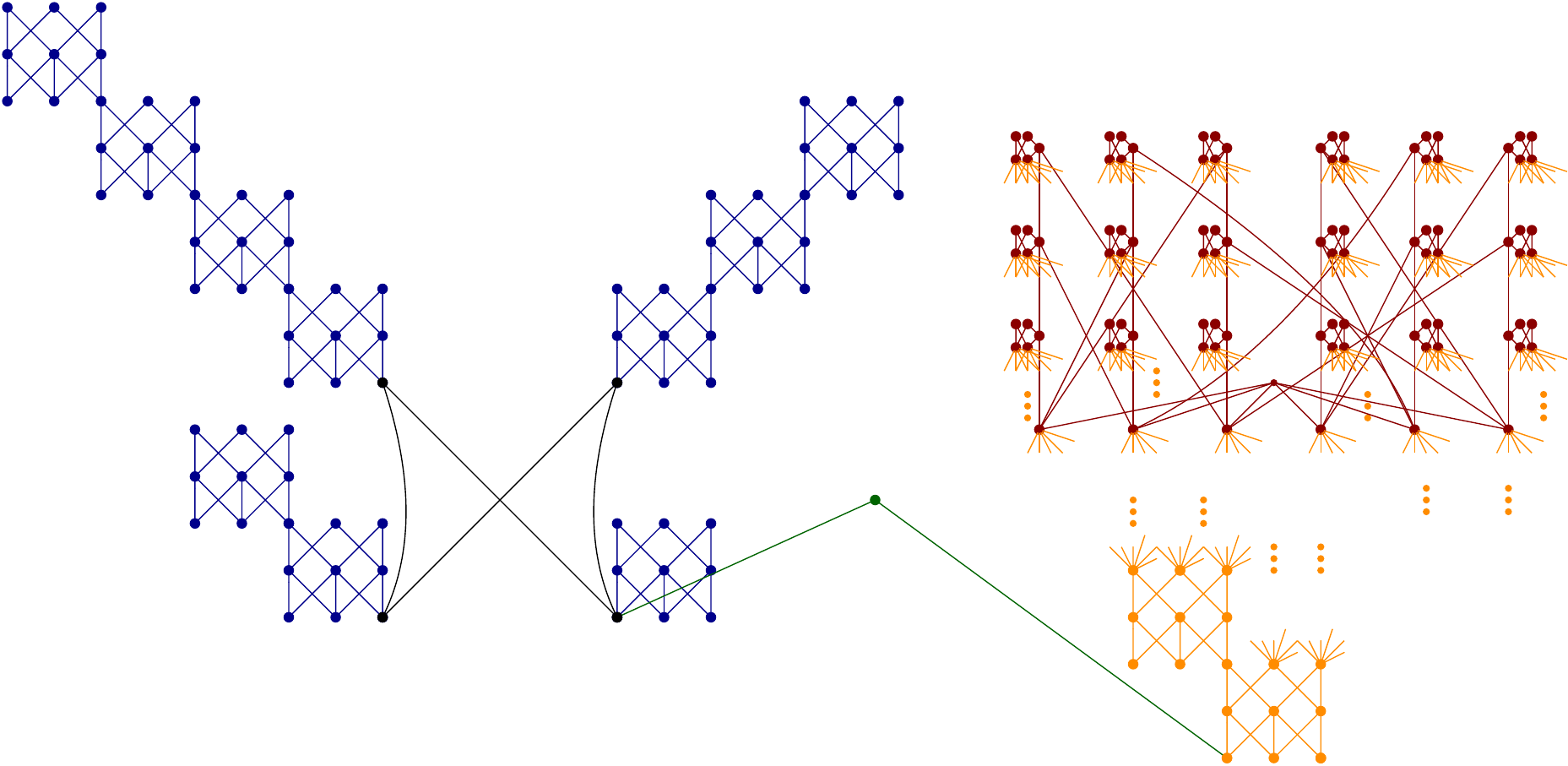}
\caption{Hasse diagram of $X$.}\label{fig:nonilpotente}

\end{figure}
\end{ex}

\begin{rem} It is not difficult to show that the topological spaces obtained in the proof of Theorem \ref{thm:homologygroupsAutE} satisfy that $\mathcal{E}_*(X)=\mathcal{E}_\#(X)=\mathcal{E}(X)$. Then it is easy to find more examples such as Example \ref{ex:nonilpotente}.
\end{rem}

\textbf{Acknowledgment.} We thank Jesús Antonio Álvarez López for posing the question that led to one of the main results of this paper. We wish to express our gratitude to Jonathan A. Barmak for his useful comments and suggestions that substantially improved this manuscript. The authors gratefully acknowledges the many helpful corrections of Andrei Martínez Finkelshtein, Elena Castilla and Jaime J. Sánchez-Gabites during the revision of the paper.

\bibliography{bibliografia}
\bibliographystyle{plain}

\newcommand{\Addresses}{{
  \bigskip
  \footnotesize

  \textsc{ P.J. Chocano, Departamento de \'Algebra, Geometr\'ia y Topolog\'ia, Universidad Complutense de Madrid, Plaza de Ciencias 3, 28040 Madrid, Spain}\par\nopagebreak
  \textit{E-mail address}:\texttt{pedrocho@ucm.es}

  \medskip

\textsc{ M. A. Mor\'on,  Departamento de \'Algebra, Geometr\'ia y Topolog\'ia, Universidad Complutense de Madrid and Instituto de
Matematica Interdisciplinar, Plaza de Ciencias 3, 28040 Madrid, Spain}\par\nopagebreak
  \textit{E-mail address}: \texttt{ma\_moron@mat.ucm.es}

  \medskip

\textsc{ F. R. Ruiz del Portal,  Departamento de \'Algebra, Geometr\'ia y Topolog\'ia, Universidad Complutense de Madrid and Instituto de
Matematica Interdisciplinar
, Plaza de Ciencias 3, 28040 Madrid, Spain}\par\nopagebreak
  \textit{E-mail address}: \texttt{R\_Portal@mat.ucm.es}

}}

\Addresses

\end{document}